\newcommand{\sysn}{\left\{\begin{array}{rcl}}
\newcommand{\sysk}{\end{array}\right.}
\newtheorem{theorem}{Theorem}[section]
\newtheorem{lemma}[theorem]{Lemma}
\theoremstyle{example}
\newtheorem{example}[theorem]{Example}
\newtheorem{proposition}[theorem]{Proposition}
\theoremstyle{definition}
\newtheorem{definition}[theorem]{Definition}
\newtheorem{remark}[theorem]{Remark}
\newtheorem{corollary}[theorem]{Corollary}
\journal{...}
\begin{document}

\title{On countable tightness type properties of spaces of quasicontinuous functions}

\author{Alexander V. Osipov}

\address{Krasovskii Institute of Mathematics and Mechanics, \\ Ural Federal
 University, Yekaterinburg, Russia}

\ead{OAB@list.ru}

\begin{abstract} In this paper we get characterizations countable tightness, countable fan-tightness and countable strong fan-tightness of spaces of quasicontinuous functions  from an open Whyburn regular space $X$ into the discrete two-point space $\{0,1\}$ with the topology of pointwise convergence through properties of $X$ determined by selection principles. These properties (e.g. $S_1(\mathcal{K}, \mathcal{K})$, $\mathcal{K}_{\Omega}$-Lindel\"{o}fness, $S_1(\mathcal{K}_{\Omega}, \mathcal{K}_{\Omega})$) were defined by M.~Scheepers and studied in theory of selection principles in the class of metric spaces.

For any uncountable cardinal number $\kappa$, we get a functional characterization of $\kappa$-Lusin spaces in class of separable metrizable spaces through tightness of compact subsets of a space of quasicontinuous real-valued functions with the topology of pointwise convergence.

\end{abstract}


\begin{keyword} quasicontinuous function
\sep Lusin space \sep open Whyburn space \sep tightness  \sep  fan-tightness  \sep strong fan-tightness
 \sep selection principle \sep Fr\'{e}chet-Urysohn

\MSC[2010] 54C35 \sep 54A25 \sep 54D20  \sep 54C10

\end{keyword}

\maketitle 


\section{Introduction} A study of some convergence properties in function spaces
is an important task of general topology. The general question in the theory of function spaces is to characterize topological properties of a space of functions on a
topological space $X$.

In $C_p$-theory it have been obtained interested
results on cardinal properties of first-countability, Fr\'{e}chet-Urysohn properties, tightness \cite{arch,Koc1,Koc2,GN,GNS,Sak} of a space $C_p(X,\mathbb{R})$ of continuous real-valued functions on a
Tychonoff space $X$ with the topology of pointwise convergence.

Archangel'skii-Pytkeev theorem \cite{arch} is a nice result
about tightness of function spaces: $t(C_p(X,\mathbb{R}))=\sup\{l(X^n): n\in \mathbb{N}\}$. Thus, $C_p(X,\mathbb{R})$ has countable tightness if
and only if $X^n$ is Lindel\"{o}f for each $n\in \mathbb{N}$.

The following result on countable fan tightness of function
spaces $C_p(X,\mathbb{R})$ is shown  by  A.V.~Archangel'skii \cite{arch2}: $C_p(X,\mathbb{R})$ has countable fan tightness if
and only if $X^n$ is a Menger space for each $n\in \mathbb{N}$ (i.e. $X$ has the property $S_{fin}(\Omega, \Omega)$).

 In \cite{Sak}, M. Sakai is shown that $C_p(X,\mathbb{R})$ has countable strong fan-tightness if
and only if $X$ has the property $S_1(\Omega, \Omega)$.

In papers \cite{os10,os12, os13}, tightness, fan tightness and strong fan-tightness of a space of continuous functions with a set-open (e.g. compact-open) topology were investigated.
In \cite{os11}, we study  tightness type properties of spaces of Baire-one functions with the topology of pointwise convergence.

In this paper, we continue to study  countable tightness, countable fan-tightness and countable strong fan-tightness of spaces of quasicontinuous functions with the topology of pointwise convergence.

A function $f:X\rightarrow Y$ is {\it quasicontinuous} at $x$ if for any open set $V$ containing $f(x)$ and any $U$ open containing $x$, there exists a nonempty open set
$W\subseteq U$ such that $f(W)\subseteq V$. It is {\it quasicontinuous} if it is quasicontinuous at
every point.
Call a set semi-open (or quasi-open) if it is contained in the closure of its
interior. Then $f: X\rightarrow Y$ is quasicontinuous if and only if the inverse of every
open set is quasi-open.

Quasicontinuous functions were studied in
many papers, see for examples \cite{Bor,HM,Hol1,Hol2,Hol3,Hol4}.

Levine \cite{Lev} studied quasicontinuous maps under the name of
semi-continuity using the terminology of semi-open sets.  A
function $f:X\rightarrow Y$ is called {\it semi-continuous} if
$f^{-1}(V)$ is semi-open in $X$ for every open
 set $V$ of $Y$. A map $f:X\rightarrow \mathbb{R}$ is quasicontinuous if and only
if $f$ is semi-continuous \cite{Lev}.

\medskip

Let $X$ and $Y$ be Hausdorff  topological spaces, $Q_p(X, Y)=(Q(X, Y),\tau_p)$ be
the space of all  quasicontinuous functions on $X$ with values in
$Y$ and $\tau_p$ be the pointwise convergence topology.

\medskip

\section{Preliminaries}

All spaces under consideration are assumed to be regular. A subset $U$ of a topological space $X$ is called a {\it regular
open set} or an {\it open domain} if $U=Int\overline{U}$ holds. A
subset $F$ of a topological space $X$ is called a {\it regular
closed } set or a {\it closed domain } if $F =\overline{IntF}$
holds.



A set $A$ is called {\it minimally bounded} with respect to the
topology $\tau$ in a topological space $(X,\tau)$ if
$\overline{Int A}\supseteq A$ and $Int\overline{A}\subseteq A$
(\cite{1}, p.101). Clearly this means $A$ is semi-open and
$X\setminus A$ is semi-open. In the case of {\it open} sets,
minimal boundedness coincides with regular openness.

Note that if $U$ is a  minimally bounded (e.g. regular open) set
of $X$ such that $U$ is not dense subset in $X$ and $B\subset
\overline{U}\setminus U$ then there is a quasicontinuous function
$f:X\rightarrow \mathbb{R}$ such that $f(U\cup B)=0$ and
$f(X\setminus (U\cup B))=1$ (see Lemma 4.2 in \cite{Hol5}).

Let us recall some properties of a
topological space $X$.

(1) A space $X$ is {\it Fr\'{e}chet-Urysohn} provided that for
every $A\subset X$ and $x\in \overline{A}$ there exists a sequence
in $A$ converging to $x$.

(2)  A space $X$ has {\it countable tightness} at a point $x$ (denoted $t(x, X)=\omega$) if $x\in
\overline{A}$, then $x\in \overline{B}$ for some countable $B\subseteq A$. A space $X$ has countable tightness (denoted $t(X)=\omega$) if $t(x, X)=\omega$ for every $x\in X$.

(3) A space $X$ has {\it countable fan-tightness} at a point $x$ (denoted $vet(x, X)=\omega$)  if for any countable family $\{A_n : n\in\omega\}$
of subsets of $X$ satisfying $x\in \bigcap_{n\in \omega} \overline{A_n}$ it is possible to select finite sets $K_n\subset A_n$ in such a way that $x\in \overline{\bigcup_{n\in\omega} K_n}$.
A space $X$ has countable fan-tightness (denoted $vet(X)=\omega$) if $vet(x, X)=\omega$ for every $x\in X$.

(4)  A space $X$ is said to have {\it countable
strong fan-tightness} at a point $x$ (denoted $vet_1(x, X)=\omega$) if for each countable family $\{A_n: n\in \omega\}$
of subsets of $X$ such that $x\in \bigcap_{n\in \omega} \overline{A_n}$, there exist $a_i\in A_i$ such that
$x\in \overline{\{a_i: i\in\omega\}}$. A space $X$ has countable strong fan-tightness (denoted $vet_1(X)=\omega$) if $vet_1(x, X)=\omega$ for every $x\in X$.



(5) A space $X$ is said to be  {\it open
Whyburn} if for every open set $A\subset X$ and every $x\in
\overline{A}\setminus A$ there is an open set $B\subseteq A$ such
that $\overline{B}\setminus A=\{x\}$ \cite{Os1}.

\medskip
Note that the class of open
Whyburn spaces is quite wide; for example, it includes all first countable regular spaces \cite{Os1} and, therefore, all metrizable spaces.

\medskip

Let $X$ be a Tychonoff topological space, $C(X,\mathbb{R})$ be the
space of all  continuous functions on $X$ with values in
$\mathbb{R}$ and $\tau_p$ be the pointwise convergence topology.
Denote by $C_p(X,\mathbb{R})$ the topological space
$(C(X,\mathbb{R}), \tau_p)$.

A real-valued function $f$ on a space $X$ is a {\it Baire-one
function} (or a {\it function of the first Baire class}) if $f$ is
a pointwise limit of a sequence of continuous functions on $X$.

 The symbol $\bf{0}$
stands for the constant function to $0$. A basic open neighborhood
of $\bf{0}$ in $\mathbb{R}^X$  is of the form $[F, (-\epsilon, \epsilon)]=\{f\in
\mathbb{R}^X: f(F)\subset (-\epsilon, \epsilon)\}$, where $F\in
[X]^{<\omega}$ and $\epsilon>0$.

\medskip
Let us recall that a cover $\mathcal{U}$ of a set $X$ is called

$\bullet$ an {\it $\omega$-cover} if each finite set $F\subseteq
X$ is contained in some $U\in \mathcal{U}$;

$\bullet$ a {\it $\gamma$-cover} if for any $x\in X$ the set
$\{U\in \mathcal{U}: x\not\in U\}$ is finite.

\medskip

In this paper $\mathcal{A}$ and $\mathcal{B}$ will be collections
of the following  covers of a space $X$:


$\mathcal{O}^s$ : the collection of all semi-open covers of $X$.

$\Omega$ : the collection of open $\omega$-covers of $X$.

$\mathcal{K}$: the collection $\mathcal{U}$ of open subsets of $X$ such that $X=\bigcup\{\overline{U}: U\in \mathcal{U}\}$.


$\Omega^s$ : the collection of minimally bounded  $\omega$-covers
of $X$.

$\Gamma^s$ : the collection of minimally bounded  $\gamma$-covers
of $X$.

$\mathcal{K}_{\Omega}$ is the set of $\mathcal{U}$ in $\mathcal{K}$ such that no element of $\mathcal{U}$ is dense in $X$, and for each finite set $F\subseteq X$, there is a $U\in \mathcal{U}$ such that $F\subseteq \overline{U}$.

$\mathcal{K}_{\Gamma}$ is the set of $\mathcal{U}$ in $\mathcal{K}$ such that no element of $\mathcal{U}$ is dense in $X$, and $\{\overline{U}: U\in \mathcal{U}\}$ is a $\gamma$-cover of $X$.

\begin{definition} Let $\mathcal{P}$ be a collection of covers of $X$.
A space is {\it $\mathcal{P}$-Lindel\"{o}f} if each element of $\mathcal{P}$ has a countable subset in $\mathcal{P}$.
\end{definition}

\begin{definition}(\cite{Kun})
A Hausdorff space $X$ is called a {\it Lusin space} ({\it in the
sense of Kunen})  if

(a) Every nowhere dense set in $X$ is countable;

(b) $X$ has at most countably many isolated points;

(c) $X$ is uncountable.
\end{definition}

If $X$ is an uncountable Hausdorff space then $X$ is $\mathcal{O}^s$-Lindel\"{o}f (semi-Lindel\"{o}f) if
and only if $X$ is a Lusin space (Corollary 2.5 in \cite{GJR}).

\medskip

If $X$ is a Lusin space, $X$ is hereditarily
Lindel\"{o}f (Lemma 1.2 in \cite{Kun}). Hence, if $X$ is a regular Lusin space then $X$ is perfect normal (3.8.A.(b) in \cite{Eng}).

\medskip

If $X$ is a Lusin space, so is every uncountable subspace (Lemma 1.1 in \cite{Kun}).


\medskip

Many topological properties are defined or characterized in terms
 of the following classical selection principles (see \cite{H1}).
 Let $\mathcal{A}$ and $\mathcal{B}$ be sets consisting of
families of subsets of an infinite set $X$. Then:

$S_{1}(\mathcal{A},\mathcal{B})$ is the selection hypothesis: for
each sequence $\{A_{n}: n\in \mathbb{N}\}$ of elements of
$\mathcal{A}$ there is a sequence $\{b_{n}\}_{n\in \mathbb{N}}$
such that for each $n$, $b_{n}\in A_{n}$, and $\{b_{n}:
n\in\mathbb{N} \}\in \mathcal{B}$.

$S_{fin}(\mathcal{A},\mathcal{B})$ is the selection hypothesis:
for each sequence $\{A_{n}: n\in \mathbb{N}\}$ of elements of
$\mathcal{A}$ there is a sequence $\{B_{n}\}_{n\in \mathbb{N}}$ of
finite sets such that for each $n$, $B_{n}\subseteq A_{n}$, and
$\bigcup_{n\in\mathbb{N}}B_{n}\in\mathcal{B}$.

\medskip

In \cite{Sh2},  M.~Scheepers investigated combinatoric properties (e.g. $S_1(\mathcal{K}, \mathcal{K})$, $\mathcal{K}_{\Omega}$-Lindel\"{o}fness, $S_1(\mathcal{K}_{\Omega}, \mathcal{K}_{\Omega})$, $S_{fin}(\mathcal{K}, \mathcal{K})$) in the class of separable metric spaces.
Unexpectedly, it turned out that these properties are characterized by countable tightness type properties of spaces of quasicontinuous functions.
Observe that every $T_2$ countable space $X$ satisfies all these properties and therefore Theorems 3.1, 4.1 and 4.3  are true for arbitrary countable spaces.

\medskip

For other notation and terminology almost without exceptions we follow the Engelking's book \cite{Eng}.

\section{Countable tightness}

\begin{lemma}\label{lem1} Every uncountable open Whyburn $\mathcal{K}$-Lindel\"{o}f space is a Lusin space.
\end{lemma}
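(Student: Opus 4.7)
The plan is to verify the two nontrivial items in the definition of a Lusin space, since uncountability of $X$ is given: (a) every nowhere dense subset of $X$ is countable, and (b) $X$ has at most countably many isolated points. Both will follow from $\mathcal{K}$-Lindel\"ofness by exhibiting a single family in $\mathcal{K}$ whose countable $\mathcal{K}$-subfamilies are forced to be small, contradicting the supposed uncountability.

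For (a), I take $N\subseteq X$ nowhere dense; then $A := X\setminus \overline{N}$ is dense open and $\overline{A}\setminus A = \overline{N}$. Open Whyburn applied to $A$ produces, for each $x\in \overline{N}$, an open $B_x\subseteq A$ with $\overline{B_x}\setminus A = \{x\}$, equivalently $\overline{B_x}\cap \overline{N} = \{x\}$. For each $y\in A$, regularity supplies an open $W_y\ni y$ with $\overline{W_y}\subseteq A$, so $\overline{W_y}\cap \overline{N} = \emptyset$. The family $\mathcal{U} := \{B_x: x\in \overline{N}\}\cup \{W_y: y\in A\}$ belongs to $\mathcal{K}$, because the closures of the $B_x$'s cover $\overline{N}$ and the $W_y$'s themselves cover $A$. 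Taking a countable $\mathcal{K}$-subfamily from the hypothesis, the $W_y$-closures miss $\overline{N}$ entirely and each selected $\overline{B_x}$ contributes only the single point $x$ to $\overline{N}$, so $\overline{N}$ is countable and hence so is $N$.

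For (b), let $I$ denote the set of isolated points of $X$. Each $\{x\}$ with $x\in I$ is open and (since $X$ is regular hence $T_1$) also closed, so $I$ is clopen. By regularity, for each $y\in X\setminus I$ I pick an open $U_y$ with $y\in U_y\subseteq \overline{U_y}\subseteq X\setminus I$. Then $\{\{x\}: x\in I\}\cup \{U_y: y\in X\setminus I\}$ is in $\mathcal{K}$; since $\overline{U_y}$ misses $I$ and $\overline{\{x\}} = \{x\}$ meets $I$ only in $\{x\}$, any countable $\mathcal{K}$-subfamily forces $|I|\le \omega$.

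The conceptual obstacle, and the reason open Whyburn is essential in (a), is that $\mathcal{K}$ freely contains trivial covers of the form $\{V\}$ for a single dense open $V$, on which $\mathcal{K}$-Lindel\"ofness is vacuous. Open Whyburn is precisely what furnishes the individual separating sets $B_x$ that make any $\mathcal{K}$-refinement spend at least one element per point of $\overline{N}$; regularity then insulates the rest of $X$ from $\overline{N}$, so that no $W_y$ can accidentally help cover a point of $\overline{N}$ through its closure. Together these two hypotheses convert $\mathcal{K}$-Lindel\"ofness into a genuine cardinality bound on both $\overline{N}$ and $I$.
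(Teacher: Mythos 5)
Your argument for condition (a) (every nowhere dense set is countable) is correct and is essentially the paper's own proof: apply open Whyburn to the dense open set $X\setminus\overline{N}$ to get, for each $x\in\overline{N}$, an open $B_x$ with $\overline{B_x}\cap\overline{N}=\{x\}$, pad with regularity off $\overline{N}$, and observe that a countable $\mathcal{K}$-subfamily can only reach countably many points of $\overline{N}$.

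Part (b), however, has a genuine gap. You assert that the set $I$ of isolated points is clopen because each singleton $\{x\}$, $x\in I$, is both open and closed. Each singleton is indeed closed in a $T_1$ space, but a union of closed sets need not be closed, and in general $I$ is open but \emph{not} closed (already in the convergent sequence $\{0\}\cup\{1/n:n\in\mathbb{N}\}$ the isolated points accumulate at $0$). Consequently $X\setminus I$ need not be open, and for a point $y\in\overline{I}\setminus I$ there is \emph{no} open $U_y\ni y$ with $\overline{U_y}\subseteq X\setminus I$ --- every neighborhood of such a $y$ already meets $I$. So the family you write down cannot be formed, and if you instead take arbitrary neighborhoods $U_y$ of these boundary points, their closures may swallow uncountably many points of $I$ and a countable $\mathcal{K}$-subfamily may well exist, destroying the contradiction. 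This is precisely the difficulty the paper's proof of claim (2) is organized around: for each $d\in\operatorname{Int}\overline{D}\setminus D$ it invokes open Whyburn on the open set $D$ to get $O_d\subseteq D$ with $\overline{O_d}\setminus D=\{d\}$, and then splits into two cases --- either every such $d$ admits a neighborhood meeting $O_d$ in a countable set (so each $\overline{O_d\cap V_d}$ contributes only countably many isolated points and the singleton family $\mu$ gives the contradiction), or some $O_d$ is locally uncountable at $d$, in which case one partitions $O_d$ into two uncountable open pieces and uses the family $\sigma$. Your proposal contains neither the use of open Whyburn at the non-isolated limit points of $I$ nor this case analysis, and both are needed; the statement of (b) is not reachable by regularity alone.
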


\begin{proof}
$(1)$ Claim that every nowhere dense set in $X$ is countable.

 Since the closure of a nowhere dense subset in $X$ is a nowhere dense set, we can
consider only closed nowhere dense sets in $X$.

   Assume that $A$ is an uncountable closed nowhere dense set in
$X$. Since $X$ is open Whyburn, for every point $a\in A$ there is
a regular open set $O_a\subseteq X\setminus A$ such that
$\overline{O_a}\setminus (X\setminus A)=\{a\}$.

Consider the family $\gamma=\{O(x): x\in X\}$ of open sets of $X$ where $O(x)=O_x$ for $x\in A$ and $O(x)$ is an open neighborhood of $x$ such that $\overline{O(x)}\cap A=\emptyset$ for $x\not\in A$. Then $\gamma\in \mathcal{K}$, but $\gamma'\not\in \mathcal{K}$ for any countable subfamily $\gamma'\subset \gamma$.

\medskip

$(2)$ Claim that $X$ has at most countably many isolated points.

Assume that $X$ has uncountable many isolated points $D$.

 Consider the set $W=Int\overline{D}$. Since $X$ is open Whyburn, for every point $d\in W\setminus D$
there is an open subset $O_d\subseteq D$ such that
$\overline{O_d}\setminus D=\{d\}$.

(a) Suppose that for every point $d\in W\setminus D$ there is a
neighborhood $V_d$ of $d$ such that  $|O_d\cap V_d|\leq \omega$.
Let $W_d=O_d\cap V_d$. Then $\overline{W_d}\setminus D=\{d\}$,
$W_d\subset D$ and $|W_d|\leq \omega$.

Consider the open family $\mu=\{\{d\}: d\in D\}\cup \{W_d:
d\in W\setminus D\}\cup (X\setminus \overline{D})$. Note that $\mu\in \mathcal{K}$, but $\mu'\not\in \mathcal{K}$ for any countable subfamily $\gamma'\subset \gamma$.

$(b)$ Suppose that there is a point $d\in W\setminus D$ such that
$|O_d\cap V_d|>\omega$ for every neighborhood $V_d$ of $d$. Let
$O_d=O_1\cup O_2$ such that $O_1\cap O_2=\emptyset$ and
$|O_i|>\omega$ for $i=1,2$.

Let $d\in \overline{O_2}$. Then, we consider the open family $\sigma=\{\{x\}: x\in O_1\}\cup \{O_2, X\setminus \overline{O_d}\}$. Note that $\sigma\in \mathcal{K}$, but $\sigma'\not\in \mathcal{K}$ for any countable subfamily $\sigma'\subset \sigma$.

\end{proof}

\begin{remark} In \cite{Sh2},  M.~Scheepers proved that a separable metrizable space $X$ is Lusin if and only if it is $\mathcal{K}$-Lindel\"{o}f.
Note that every Lusin space is hereditarily
Lindel\"{o}f (Lemma 1.2 in \cite{Kun}), hence, every Lusin space is $\mathcal{K}$-Lindel\"{o}f.
\end{remark}

\begin{corollary} An uncountable open Whyburn space $X$ is $\mathcal{K}$-Lindel\"{o}f if and only if it is a Lusin space.

\end{corollary}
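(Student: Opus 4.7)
The forward direction is precisely Lemma \ref{lem1}, so all the work is in the reverse implication: if $X$ is an uncountable open Whyburn Lusin space, then every $\mathcal{K}$-cover admits a countable subfamily still in $\mathcal{K}$. (In fact the open Whyburn hypothesis is not needed in this direction.) My plan is to reduce $\mathcal{K}$-Lindelöfness to semi-Lindelöfness, which is available because the paper has already quoted that an uncountable Hausdorff space is $\mathcal{O}^s$-Lindelöf iff it is a Lusin space (Corollary 2.5 in \cite{GJR}).

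The key observation is that for any open set $U\subseteq X$, its closure $\overline{U}$ is semi-open: indeed $U\subseteq \mathrm{Int}\,\overline{U}$, hence $\overline{U}\subseteq \overline{\mathrm{Int}\,\overline{U}}$. So, given $\mathcal{U}\in\mathcal{K}$, the family $\mathcal{V}=\{\overline{U}:U\in\mathcal{U}\}$ is a semi-open cover of $X$. Applying $\mathcal{O}^s$-Lindelöfness of $X$, I extract a countable subcover $\{\overline{U_n}:n\in\omega\}$ with each $U_n\in\mathcal{U}$. Then $\{U_n:n\in\omega\}$ is a countable subfamily of $\mathcal{U}$ whose closures cover $X$, i.e. it lies in $\mathcal{K}$. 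This is precisely what $\mathcal{K}$-Lindelöfness requires.

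A conceptually simpler (but less clean) alternative route would be to invoke hereditary Lindelöfness of Lusin spaces directly: set $V=\bigcup\mathcal{U}$, pick a countable $\mathcal{U}'\subseteq\mathcal{U}$ with $\bigcup\mathcal{U}'=V$; however this only yields $\overline{\bigcup\mathcal{U}'}=X$, which is strictly weaker than $\bigcup_{U\in\mathcal{U}'}\overline{U}=X$, so one would still need an additional diagonal argument. The semi-open-cover route avoids this gap entirely, so I would adopt it.

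The only mild obstacle is making sure the equivalence quoted from \cite{GJR} applies: the hypothesis there is \emph{uncountable Hausdorff}, which is exactly what I have, and the statement is an iff, so the direction Lusin $\Rightarrow$ semi-Lindelöf is indeed available. Combining Lemma \ref{lem1} with the semi-open-cover argument yields both directions of the corollary.
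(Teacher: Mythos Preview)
Your argument is correct and takes a genuinely different route from the paper for the reverse implication. The paper (in the Remark immediately preceding the Corollary) argues: Lusin $\Rightarrow$ hereditarily Lindel\"{o}f, hence $\mathcal{K}$-Lindel\"{o}f. As you rightly notice, the second step is not automatic: a countable $\mathcal{U}'\subseteq\mathcal{U}$ with $\bigcup\mathcal{U}'=\bigcup\mathcal{U}$ gives only $\overline{\bigcup\mathcal{U}'}=X$, not $\bigcup_{U\in\mathcal{U}'}\overline{U}=X$ (indeed $[0,1]$ is hereditarily Lindel\"{o}f but not $\mathcal{K}$-Lindel\"{o}f). In a Lusin space the gap is closed not by a diagonal argument but simply by observing that $X\setminus\bigcup\mathcal{U}$ is closed and nowhere dense, hence countable, so one adds one member of $\mathcal{U}$ for each of its points; the paper's Remark suppresses this step. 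Your route through the $\mathcal{O}^s$-Lindel\"{o}f characterization from \cite{GJR} is cleaner: since the closure of an open set is regular closed and therefore semi-open, any $\mathcal{U}\in\mathcal{K}$ produces a semi-open cover $\{\overline{U}:U\in\mathcal{U}\}$, and semi-Lindel\"{o}fness yields the desired countable subfamily in one stroke. Both approaches are valid; yours bypasses the intermediate subtlety entirely.
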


It is well known that $f$ is
of the first Baire class if and only if $f^{-1}(U)$ is a countable unions of zero sets
for every open $U\subseteq \mathbb{R}$ (see Exercise
3.A.1 in \protect\cite{lmz1}).

\begin{proposition} Let $X$ be a Lusin space. Then every real-valued quasicontinuous function is of the first Baire class.
\end{proposition}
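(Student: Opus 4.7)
The plan is to apply the characterization recalled in the excerpt: $f$ is of the first Baire class if and only if $f^{-1}(U)$ is a countable union of zero sets for every open $U\subseteq\mathbb{R}$. So I fix an open $U\subseteq\mathbb{R}$ and analyze $f^{-1}(U)$ using (a) the fact that $f$ quasicontinuous makes $f^{-1}(U)$ semi-open, and (b) the Lusin hypothesis on $X$.

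Set $W=\mathrm{Int}\, f^{-1}(U)$ and $C=f^{-1}(U)\setminus W$. Because $f^{-1}(U)$ is semi-open, $f^{-1}(U)\subseteq\overline{W}$, so $C\subseteq \overline{W}\setminus W$. The latter is the boundary of an open set and hence nowhere dense in $X$. Since $X$ is a Lusin space, every nowhere dense set is countable; therefore $C$ is countable. This is the central observation: the semi-openness of the preimage confines its ``bad part'' to a nowhere dense, hence countable, piece.

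Now I would invoke the fact noted just before the statement that a regular Lusin space is perfectly normal. In a perfectly normal space every open set is an $F_\sigma$ and every closed set is a zero set, so $W$ is a countable union of zero sets. Since $X$ is $T_1$, every singleton is closed and therefore (by perfect normality) a zero set; as $C$ is countable, it is a countable union of such singletons. Consequently
\[
f^{-1}(U)=W\cup C=\bigcup_{n\in\mathbb{N}} Z_n,
\]
where each $Z_n$ is a zero set. By the characterization quoted above, $f$ is of the first Baire class.

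The only nontrivial step is recognizing that $f^{-1}(U)\setminus\mathrm{Int}\, f^{-1}(U)$ lies in the boundary of an open set (hence is nowhere dense) and applying the Lusin condition to it; the remainder is a routine application of perfect normality. I do not expect a real obstacle, only careful bookkeeping of which property of Lusin spaces is used where.
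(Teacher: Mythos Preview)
Your proof is correct and follows essentially the same route as the paper: decompose the semi-open preimage $f^{-1}(U)$ into its interior $W$ and a residual piece $C$ contained in $\overline{W}\setminus W$, use the Lusin hypothesis to make $C$ countable, and use perfect normality of $X$ to realize both $W$ and the singletons of $C$ as (countable unions of) zero sets. The paper's argument is terser but identical in substance; you have simply made the bookkeeping explicit.
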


\begin{proof} Let $X$ be a Lusin space. Then $X$ is a perfect normal space and, hence, any open set is a countable unions of zero sets. It remains to note that any semi-open set in $X$ is a unions of open set and (countable set of points) countable nowhere dense subset of $X$.
\end{proof}

\begin{corollary} Let $X$ be a Lusin space and $f$ be a real-valued quasicontinuous function. Then the set $D_f=\{x\in X: f$ is discontinuous in $x \}$ is countable.
\end{corollary}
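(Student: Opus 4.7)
The plan is to decompose $D_f$ into a countable union of closed nowhere dense subsets of $X$, and then invoke the defining property of a Lusin space to conclude that each piece, and hence $D_f$ itself, is countable.

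First I would introduce the oscillation function. For $x \in X$, set
$$\mathrm{osc}(f,x) = \inf\{\mathrm{diam}(f(U)) : U \text{ is an open neighborhood of } x\}.$$
A point $x$ lies in $D_f$ if and only if $\mathrm{osc}(f,x) > 0$, so writing $D_n = \{x \in X : \mathrm{osc}(f,x) \ge 1/n\}$ gives the decomposition $D_f = \bigcup_{n \in \mathbb{N}} D_n$. Each $D_n$ is closed, since $\mathrm{osc}(f,\cdot)$ is upper semi-continuous: if $y \notin D_n$ then some open neighborhood $U$ of $y$ has $\mathrm{diam}(f(U)) < 1/n$, and the same $U$ witnesses $\mathrm{osc}(f,z) < 1/n$ for every $z \in U$.

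The key step is showing that each $D_n$ is nowhere dense in $X$, and this is the place where quasicontinuity is really used. Suppose for contradiction that some nonempty open set $U$ is contained in $D_n$. Pick any $x \in U$ and set $V = (f(x) - 1/(4n),\, f(x) + 1/(4n))$, an open neighborhood of $f(x)$ of diameter $1/(2n) < 1/n$. By quasicontinuity of $f$ at $x$, there exists a nonempty open set $W \subseteq U$ with $f(W) \subseteq V$. But then $W$ is itself an open neighborhood of any of its points $y$, so $\mathrm{osc}(f,y) \le \mathrm{diam}(V) = 1/(2n) < 1/n$, forcing $y \notin D_n$. This contradicts $W \subseteq U \subseteq D_n$.

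Finally, because $X$ is Lusin, every nowhere dense subset of $X$ is countable; hence each $D_n$ is countable, and therefore $D_f = \bigcup_{n} D_n$ is a countable union of countable sets, which is countable. The main obstacle is precisely the nowhere denseness of the level sets $D_n$: once that is extracted from the pointwise definition of quasicontinuity, the rest of the argument is an essentially automatic application of the defining property of Lusin spaces (one could alternatively bypass this step by invoking the preceding proposition, which guarantees that $f$ is of first Baire class, together with the classical fact that the discontinuity set of a Baire-one function is a first-category $F_\sigma$).
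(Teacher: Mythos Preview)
Your proof is correct, but it proceeds along a different line from the paper. The paper fixes a countable open basis $\{V_n\}$ for $\mathbb{R}$ and writes
\[
D_f=\bigcup_{n\in\mathbb{N}}\bigl(f^{-1}(V_n)\setminus\mathrm{Int}\,f^{-1}(V_n)\bigr),
\]
then uses the \emph{semi-open} characterization of quasicontinuity: since each $f^{-1}(V_n)$ is semi-open, the difference $f^{-1}(V_n)\setminus\mathrm{Int}\,f^{-1}(V_n)$ is contained in the boundary of an open set, hence nowhere dense, hence countable in a Lusin space. Your argument instead uses the oscillation decomposition $D_f=\bigcup_n D_n$ with $D_n=\{x:\mathrm{osc}(f,x)\ge 1/n\}$ and obtains nowhere denseness directly from the \emph{pointwise} definition of quasicontinuity. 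Both routes reduce to ``countable union of nowhere dense sets'' and then invoke the Lusin property; the paper's version fits neatly into the semi-open framework already developed in the article, whereas yours is more self-contained and would work verbatim without ever mentioning semi-open sets.
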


\begin{proof}  Fix an open countable basis $\{V_n\}$ for $\mathbb{R}$. We then have

   $x\in D_f$ $\Leftrightarrow$ $\exists$ $n$ $[x\in f^{-1}(V_n)\setminus Int(f^{-1}(V_n))]$, i.e.,

   $D_f=\bigcup\{  f^{-1}(V_n)\setminus Int(f^{-1}(V_n)): n\in \mathbb{N}\}$. It remains to note that any set $f^{-1}(V_n)\setminus Int(f^{-1}(V_n))$ is countable.
\end{proof}

The space of all quasicontinuous functions from $X$ into the discrete space $\mathbb{D}=\{0,1\}$ is denote by $Q_p(X, \mathbb{D})$.

\begin{theorem}\label{th2} For an uncountable open Whyburn space $(X,\tau)$ the following statements are equivalent:

\begin{enumerate}

\item $X$ is $\Omega^s$-Lindel\"{o}f;

\item $X$ is $\mathcal{K}_{\Omega}$-Lindel\"{o}f;

\item $t({\bf 0}, Q_p(X,\mathbb{R}))=\omega$;

\item $t(f, Q_p(X,\mathbb{R}))=\omega$ for every $f\in C(X,\mathbb{R})$;

\item $t(Q_p(X, \mathbb{D}))=\omega$.

\end{enumerate}

\end{theorem}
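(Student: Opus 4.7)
The plan is to prove the theorem as three equivalences $(1) \Leftrightarrow (2)$, $(2) \Leftrightarrow (3) \Leftrightarrow (4)$, and $(2) \Leftrightarrow (5)$. A common tool throughout is that for any non-dense minimally bounded set $V \subseteq X$, the remark following the definition of minimal boundedness produces a $\{0,1\}$-valued quasicontinuous function vanishing on $V$ and equal to $1$ off $V$.

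For $(1) \Leftrightarrow (2)$, I would convert covers between the two classes. Given $\mathcal{U} \in \mathcal{K}_\Omega$, replace each $U$ by the regular-open set $V_U := \mathrm{Int}\,\overline{U}$; it is minimally bounded, non-dense, and $\overline{V_U}=\overline{U}$, so the finite-closure property is preserved. For a finite $F$ with $F\subseteq\overline{V_U}$ the set $V_U\cup F$ is still minimally bounded (it lies between $V_U$ and $\overline{V_U}$), so the indexed family $\{V_U\cup B\}$, with $B$ a finite subset of $\overline{V_U}$, is a minimally bounded $\omega$-cover; $\Omega^s$-Lindel\"{o}fness extracts a countable subfamily whose first coordinates form a countable $\mathcal{K}_\Omega$-subfamily. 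The converse passes through $V\mapsto \mathrm{Int}\,V$ on non-dense members of $\Omega^s$-covers, with open Whyburn used to patch the finite-cover condition. For $(3) \Leftrightarrow (4)$, $(4) \Rightarrow (3)$ is trivial, and $(3) \Rightarrow (4)$ follows because $T_h : g \mapsto g + h$ is a self-homeomorphism of $Q_p(X,\mathbb{R})$ for every $h \in C(X,\mathbb{R})$ (quasicontinuous plus continuous is quasicontinuous), carrying $\mathbf{0}$ to $h$.

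The interplay between covering and function-space conditions is captured by two parallel constructions. For $(3) \Rightarrow (2)$ (and similarly $(5) \Rightarrow (2)$), given $\mathcal{U} \in \mathcal{K}_\Omega$, apply the remark to each minimally bounded non-dense set $\mathrm{Int}\,\overline{U}$ to produce a two-valued quasicontinuous $g_U$ with $g_U \equiv 0$ on $\overline{U}$; the finite-closure property forces $\mathbf{0} \in \overline{\{g_U\}}$ in both $Q_p(X,\mathbb{R})$ and $Q_p(X,\mathbb{D})$, and countable tightness extracts $\{g_{U_n}\}$ with $\mathbf{0}$ in its closure, giving the desired countable $\mathcal{K}_\Omega$-subfamily $\{U_n\}$. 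For $(2) \Rightarrow (3)$, if $\mathbf{0} \in \overline{A}$ in $Q_p(X, \mathbb{R})$ then quasicontinuity of each $f \in A$ yields $F \subseteq \overline{\mathrm{Int}\,f^{-1}((-1/n,1/n))}$ whenever $f(F)\subseteq(-1/n,1/n)$, so the non-dense members of these interiors form a $\mathcal{K}_\Omega$-family, and $(2)$ extracts a countable subfamily that pulls back to a countable $B \subseteq A$ witnessing tightness at $\mathbf{0}$. The implication $(2) \Rightarrow (5)$ runs analogously at a general point $f \in Q_p(X, \mathbb{D})$ via the semi-open splitting $X = f^{-1}(0) \cup f^{-1}(1)$, folding the two resulting covers into a single $\mathcal{K}_\Omega$-cover of $X$.

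The principal obstacle will be $(2) \Rightarrow (5)$ at an arbitrary non-constant $f \in Q_p(X, \mathbb{D})$: the natural involution $g \mapsto g \oplus f$ of $Q_p(X, \mathbb{D})$ need not preserve quasicontinuity (intersections of semi-open sets are in general not semi-open), so there is no translation homogenizing the space, and the two-variable pairing of finite sets in $f^{-1}(0)$ and $f^{-1}(1)$ must be combined into a single covering problem on $X$. A secondary care is required throughout: the remark produces $g_U$ only for non-dense base sets, so in $(2) \Rightarrow (3)$ one must either discard the dense instances (arguing that enough other $(f, n)$ remain to witness the closure at $\mathbf{0}$) or handle them separately using the open Whyburn property.
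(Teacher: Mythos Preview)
Your treatment of $(1)\Leftrightarrow(2)$, $(2)\Leftrightarrow(3)$, $(3)\Leftrightarrow(4)$, and $(5)\Rightarrow(2)$ is essentially what the paper does, with only cosmetic differences (the paper records $(1)\Rightarrow(2)$ as ``trivial'' and proves $(2)\Rightarrow(1)$ by the open-Whyburn refinement you describe; your $(3)\Rightarrow(2)$ is the paper's $(3)\Rightarrow(1)$ composed with $(1)\Rightarrow(2)$).

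The real issue is $(2)\Rightarrow(5)$, which you yourself flag as the principal obstacle. Your proposed mechanism --- ``folding the two resulting covers into a single $\mathcal{K}_\Omega$-cover of $X$'' --- is not made precise, and I do not see how to make it work directly. If $f\in\overline{\{g_\alpha\}}$ in $Q_p(X,\mathbb{D})$, the relevant condition on a finite $F$ is that $F\cap f^{-1}(0)\subseteq g_\alpha^{-1}(0)$ and $F\cap f^{-1}(1)\subseteq g_\alpha^{-1}(1)$ simultaneously; translating this into containment of $F$ in the closure of a \emph{single} open set indexed by $\alpha$ runs into trouble precisely on the nowhere dense boundary $\partial f^{-1}(0)=\partial f^{-1}(1)$, and nothing in your outline controls that set.

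The paper's argument is genuinely different and bypasses the folding problem. It first invokes Lemma~\ref{lem1}: condition~(2) implies $X$ is $\mathcal{K}$-Lindel\"of, hence a Lusin space, so the discontinuity set $D_f$ (and its closure) is \emph{countable}. One then passes to a finer topology $\tau_f$ on $X$ in which the points of $\overline{D_f}$ are isolated; in $(X,\tau_f)$ the map $f$ becomes continuous, every $\tau$-quasicontinuous map remains $\tau_f$-quasicontinuous, and two short hereditary lemmas (open subspaces of $\mathcal{K}_\Omega$-Lindel\"of spaces are $\mathcal{K}_\Omega$-Lindel\"of; adjoining a countable set preserves $\mathcal{K}_\Omega$-Lindel\"of) show $(X,\tau_f)$ still satisfies~(2). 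Now the already-proved implication $(2)\Rightarrow(4)$ applied to $(X,\tau_f)$ gives countable tightness at $f$. Your outline is missing this retopologization idea and, more fundamentally, any use of Lemma~\ref{lem1}; without knowing that $D_f$ is countable, the splitting $X=f^{-1}(0)\cup f^{-1}(1)$ does not reduce to a manageable covering problem.
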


\begin{proof} $(1)\Rightarrow(2)$ and $(4)\Rightarrow(3)$. It is trivial.

$(2)\Rightarrow(1)$. It is enough to prove that for any $\mathcal{U}=\{U_{\alpha}\}_{\alpha\in A}\in \Omega^s$ there exists $\mathcal{V}=\{V_{\beta}\}_{\beta\in B}\in \mathcal{K}_{\Omega}$ such that for any $\beta\in B$ there is $\alpha\in A$ such that $\overline{V_{\beta}}\subseteq U_{\alpha}$. We can denote this as $\mathcal{V}\succ \mathcal{U}$.  Let $F$ be a finite subset of $X$. Then there is $U_{\alpha}$ such that $F\subseteq U_{\alpha}$. Since $X$ is an open Whyburn space, there is an open set $V_F$ such that $F\subseteq \overline{V_F}\subseteq U_{\alpha}$. Let $\mathcal{V}=\{V_F: F\in [X]^{<\omega}\}$.

$(2)\Rightarrow(3)$. Assume that ${\bf 0}\in \overline{\{f_{\alpha}: \alpha\in A\}}$ where $|A|>\omega$. Let $n\in \mathbb{N}$ and $\mathcal{V}_n=\{V_{\alpha,n}=f^{-1}_{\alpha}((-\frac{1}{n},\frac{1}{n})): \alpha\in A\}$.
Then, $\mathcal{V}_n$ is a semi-open $\omega$-cover of $X$. Since $X$ is open
Whyburn, for every $V\in \mathcal{V}_n$ and a finite subset $F$ of $V$ there is an open subset $W_{F,V}$ in $X$ such that $F\subseteq \overline{W_{F,V}}\subseteq V$. Then $\mathcal{W}=\{W_{F,V}: V\in \mathcal{V}, F\in [V]^{<\omega}\}\in \mathcal{K}_{\Omega}$ and $\mathcal{W}\succ \mathcal{V}_n$.
Since $X$ is $\mathcal{K}_{\Omega}$-Lindel\"{o}f, there is a countable subfamily $\mathcal{W}'=\{W_{F_i,V_{\alpha_i,n}}: i\in \mathbb{N}\}$ of $\mathcal{W}$  such that $\mathcal{W}'\in \mathcal{K}_{\Omega}$.
It follows that $\mathcal{V}'_n=\{V_{\alpha_i,n}: i\in \mathbb{N}\}$ is a countable subfamily of $\mathcal{V}_n$. Denote by $F_n=\{f_{\alpha_i}: i\in \mathbb{N}\}$.
Thus, for every $n\in \mathbb{N}$, $\mathcal{V}'_n\in \Omega^s$ which implies ${\bf 0}\in \overline{\bigcup\{F_n:  n\in \mathbb{N}\}}$.

$(3)\Rightarrow(1)$.  Let $\{U_{\alpha}\}_{\alpha\in A}\in \Omega^s$.
Consider the quasicontinuous function $f_{\alpha}:X\rightarrow \{0,1\}$ such that $f_{\alpha}(U_{\alpha})=0$ and $f_{\alpha}(X\setminus U_{\alpha})=1$ for each $\alpha\in A$. Then ${\bf 0}\in \overline{\{f_{\alpha}: \alpha\in A\}}$. Since $t({\bf 0},Q(X,\mathbb{R}))=\omega$, there is $B\subset A$ such that $|B|=\omega$ and ${\bf 0}\in \overline{\{f_{\alpha}: \alpha\in B\}}$. It follows that $\{U_{\alpha}: \alpha\in B\}\in \Omega^s$.

$(3)\Rightarrow(4)$. Note that for any space $X$ and maps $f,g: X\rightarrow \mathbb{R}$ such
that $f$ is continuous and $g$ is quasicontinuous, the map $f+g: X\rightarrow \mathbb{R}$ defined by
$(f+g)(x)=f(x)+g(x)$ is quasicontinuous (Proposition 5.4 in
\cite{KT}). Thus, the mapping $h_f: Q_p(X,\mathbb{R})\rightarrow Q_p(X,\mathbb{R})$ such that $h_f(g)=f+g$ for every $g\in Q_p(X,\mathbb{R})$ is a homeomorphism for any $f\in C_p(X,\mathbb{R})$.
It follows that $(3)$ implies $(4)$.

$(2)\Rightarrow(5)$.  Let $f\in Q_p(X, \mathbb{D})$. Note that $f^{-1}(\{d\})$ is a semi-open set in $X$ for every $d\in \mathbb{D}$.
The set $D_f=\{x\in X: f$ is discontinuous in $x \}$ is a nowhere dense subset of $X$. Since $X$ is Lusin, the set $\overline{D_f}$ is countable.

Consider the new topology $\tau_f$, the base of which forms the family $\tau\cup \{\{d\}: d\in \overline{D_f}\}$.
Let $id: (X,\tau_f)\rightarrow (X,\tau)$ be the identity mapping from $(X,\tau_f)$ onto $(X,\tau)$.

It's easy to check that if $g\in Q_p((X,\tau),\mathbb{D})$ then $g\circ id\in Q_p((X,\tau_f),\mathbb{D})$.

\medskip
Claim that $(X,\tau_f)$ is $\mathcal{K}_{\Omega}$-Lindel\"{o}f. To do this, we will prove two facts for $\mathcal{K}_{\Omega}$-Lindel\"{o}f spaces.

\medskip

(a) {\it If $X$ is $\mathcal{K}_{\Omega}$-Lindel\"{o}f and $G$ is an open subset of $X$ then $G$ is $\mathcal{K}_{\Omega}$-Lindel\"{o}f.}

\medskip
By Lemma \ref{lem1}, $X$ is a Lusin space and, hence, $X$ is a perfect normal space. Thus the set $X\setminus G$ is $G_{\delta}$.
Let $X\setminus G=\bigcap W_i$ where $W_{i+1}\subset W_{i}$ and $W_i\in \tau$ for each $i\in \mathbb{N}$. Consider $\mathcal{V}=\{V_{\alpha}: \alpha\in A\}\in \mathcal{K}_{\Omega}$ where $\mathcal{K}_{\Omega}$ in the subspace $G$. Note that $V_{\alpha}$ is not dense in $G$ for each $V_{\alpha}\in \mathcal{V}$.  Since $X$ is regular, there is an open set $O_{\alpha}$ in $X$ such that $\overline{O_{\alpha}}\subset G\setminus \overline{V_{\alpha}}$.

Let $\mathcal{O}_i=\{V_{\alpha,i}=V_{\alpha}\cup (W_i\setminus \overline{O_{\alpha}}) : \alpha\in A\}$. Note that $V_{\alpha,i}$ is not dense in $(X,\tau)$ for each $\alpha\in A$.
Then $\mathcal{O}_i\in \mathcal{K}_{\Omega}$ in the space $(X,\tau)$.
Then, there exists $\mathcal{O}'_i=\{V_{\alpha_j}\cup (W_i\setminus \overline{O_{\alpha_j}}): j\in \mathbb{N}\}$ such that $\mathcal{O}'_i\in \mathcal{K}_{\Omega}$ in the space $(X,\tau)$.

Let $\mathcal{V}'=\{V_{\alpha_j(i)}: i,j\in \mathbb{N}\}$.
Remain note that $\mathcal{V}'\in \mathcal{K}_{\Omega}$ where $\mathcal{K}_{\Omega}$ in the subspace $G$.
If $F\in [G]^{<\omega}$ then there is $i'\in \mathbb{N}$ such that $F\cap W_{i'}=\emptyset$. Hence, there is $j'$ such that $F\subseteq \overline{V_{\alpha_{j'}(i')}}$.

\medskip

(b) {\it If $X$ is an open $\mathcal{K}_{\Omega}$-Lindel\"{o}f subspace of $X\cup S$ where $S$ is countable then $X\cup S$ is $\mathcal{K}_{\Omega}$-Lindel\"{o}f.}

\medskip

We can assume that $X\cap S=\emptyset$ otherwise we can consider $S'=S\setminus X$. Let $S=\{s_n : n\in \mathbb{N}\}$.
Consider $\mathcal{V}=\{V_{\alpha}: \alpha\in A\}\in \mathcal{K}_{\Omega}$ where $\mathcal{K}_{\Omega}$ in the space $X\cup S$.

Let $\mathcal{V}_n=\{V_{\alpha}\in \mathcal{V}: \{s_1,...,s_n\}\subseteq \overline{V_{\alpha}}$ and $V_{\alpha}\cap X\neq \emptyset\}$.

\medskip

(1) Assume that for any $n\in \mathbb{N}$ there is $k(n)>n$ and $V_{\alpha(k(n))}\in \mathcal{V}_{k(n)}$ such that $X\subset \overline{V_{\alpha(k(n))}}$. Then $\{V_{\alpha(k(n))}: n\in \mathbb{N}\}\in \mathcal{K}_{\Omega}$.

\medskip

(2) Otherwise there is $n'\in \mathbb{N}$ such that for any $k>n'$ and $V_{\alpha}\in \mathcal{V}_k$ the set $X\setminus \overline{V_{\alpha}}$ is not empty.

Thus $\mathcal{U}_n=\{X\cap V_{\alpha}: V_{\alpha}\in \mathcal{V}_n\}\in \mathcal{K}_{\Omega}$ in the space $X$ for every $n>k$.
Then, there is $\mathcal{U}'_n=\{X\cap V_{\alpha_i}: i\in \mathbb{N}\}\in \mathcal{K}_{\Omega}$ in the space $X$ for every $n>k$. Note that $P_n=\{V_{\alpha_i}\in \mathcal{U}'_n: i\in \mathbb{N}\}\in \mathcal{K}_{\Omega}$ in the space $X\cup \{s_1,...,s_n\}$.
Let $P=\bigcup P_n$. Then $P$ is countable, $P\subset \mathcal{V}$ and $P\in\mathcal{K}_{\Omega}$ in the space $X\cup S$.

\medskip

By the fact (a),  the subspace $X\setminus \overline{D_f}$ is $\mathcal{K}_{\Omega}$-Lindel\"{o}f.

By the fact (b), the space $(X,\tau_f)$ is $\mathcal{K}_{\Omega}$-Lindel\"{o}f.

\medskip Assume that $f\in \overline{\{f_{\alpha}: \alpha\in A\}}$ where $F=\{f_{\alpha}: \alpha\in A\}\subset Q_p((X,\tau), \mathbb{D})$ and $|A|>\omega$.
Then $f \circ id\in \overline{\{f_{\alpha} \circ id: \alpha\in A\}}$ where $\{f_{\alpha} \circ id: \alpha\in A\}\subset Q_p((X,\tau_f), \mathbb{D})$.
Note that (2) implies (4), $(X,\tau_f)$ is $\mathcal{K}_{\Omega}$-Lindel\"{o}f and $f \circ id\in C((X,\tau_f), \mathbb{R})$.
Then, by (4), there is a countable set $B\subset A$ such that  $f \circ id\in \overline{\{f_{\alpha_i} \circ id: \alpha_i\in B\}}$. It follows that $f\in \overline{\{f_{\alpha_i}: \alpha_i\in B\}}$.

$(5)\Rightarrow(1)$.  Similar to the implication $(3)\Rightarrow(1)$.

\end{proof}

In particular, we get the following corollary in class of metrizable spaces.

\begin{corollary} A metrizable space $X$ is $\mathcal{K}_{\Omega}$-Lindel\"{o}f if, and only if, $t(Q_p(X, \mathbb{D}))=\omega$.
\end{corollary}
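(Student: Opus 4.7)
The plan is to reduce the corollary directly to Theorem \ref{th2} by observing that metrizability supplies the hypothesis (open Whyburn) needed there, and then handling the countable case separately. Since every metrizable space is first countable and regular, it is open Whyburn by the remark made after item (5) in the preliminaries. Thus if $X$ is uncountable the equivalence (2)$\Leftrightarrow$(5) of Theorem \ref{th2} applies verbatim and gives the result.

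For the countable case, I would argue that both sides of the equivalence hold automatically. On the one hand, if $X$ is countable then any $\mathcal{U}\in\mathcal{K}_{\Omega}$ contains a countable subfamily still in $\mathcal{K}_{\Omega}$: for each finite $F\subseteq X$ pick $U_F\in\mathcal{U}$ with $F\subseteq\overline{U_F}$, and since $[X]^{<\omega}$ is countable this yields a countable witness. On the other hand, for countable $X$ the product $\mathbb{D}^X$ is second countable, hence has countable tightness, and this passes to the subspace $Q_p(X,\mathbb{D})$. Therefore both sides of the stated equivalence are vacuously true, matching the remark preceding Theorem \ref{th2} that the theorem holds for arbitrary countable spaces.

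The main step is thus only to invoke Theorem \ref{th2} after checking the open Whyburn property; no new combinatorial or topological work is needed. The only mild obstacle is making sure the countable case is handled cleanly, because Theorem \ref{th2} is stated only for uncountable $X$; this is dispatched as above by noting that both the selection property and the countable tightness of $Q_p(X,\mathbb{D})$ are automatic for countable metrizable $X$. Putting the two cases together yields the corollary.
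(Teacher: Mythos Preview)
Your proposal is correct and matches the paper's approach: the corollary is stated in the paper without an explicit proof, as an immediate consequence of Theorem~\ref{th2} once one observes that metrizable spaces are open Whyburn (first countable regular), together with the earlier remark that every $T_2$ countable space trivially satisfies all the relevant properties. Your separate handling of the countable case is exactly what that remark encodes.
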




\section{Countable strong fan-tightness and countable fan-tightness}

\begin{theorem}\label{th1} For an uncountable open Whyburn space $X$ the following statements are equivalent:

\begin{enumerate}

\item $X$ satisfy $S_1(\Omega^s,\Omega^s)$;

\item $X$ satisfy $S_1(\mathcal{K}_{\Omega},\mathcal{K}_{\Omega})$;

\item $vet_1({\bf 0}, Q_p(X, \mathbb{R}))=\omega$;

\item $vet_1(f, Q_p(X,\mathbb{R}))=\omega$ for every $f\in C(X,\mathbb{R})$;

\item $vet_1(Q_p(X, \mathbb{D}))=\omega$.

\end{enumerate}

\end{theorem}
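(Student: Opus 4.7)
The plan is to run the same cyclic proof as for Theorem~\ref{th2}, with ``extract a countable subfamily'' replaced everywhere by ``select one element per stage''. The easy implications go through verbatim: $(1)\Rightarrow(2)$ uses the open-Whyburn refinement $\mathcal{V}\succ\mathcal{U}$ from Theorem~\ref{th2}, $(4)\Rightarrow(3)$ takes $f=\mathbf{0}$, and $(3)\Rightarrow(4)$ uses the translation homeomorphism $g\mapsto f+g$ of $Q_p(X,\mathbb{R})$.

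For $(2)\Rightarrow(3)$, given $\mathbf{0}\in\bigcap_n\overline{F_n}$ with $F_n=\{f_\alpha^{(n)}:\alpha\in A_n\}\subseteq Q_p(X,\mathbb{R})$, I would form the semi-open $\omega$-covers $\mathcal{V}_n=\{(f_\alpha^{(n)})^{-1}((-1/n,1/n)):\alpha\in A_n\}$, refine each to $\mathcal{W}_n\in\mathcal{K}_{\Omega}$ via the open-Whyburn property, apply $S_1(\mathcal{K}_{\Omega},\mathcal{K}_{\Omega})$ to pick $W_n\in\mathcal{W}_n$ with $\{W_n:n\in\omega\}\in\mathcal{K}_{\Omega}$, and return the functions $f_{\alpha_n}^{(n)}\in F_n$ corresponding to the chosen $W_n\subseteq V_{\alpha_n,n}$. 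For $(3)\Rightarrow(1)$, and identically $(5)\Rightarrow(1)$, given $\mathcal{U}_n\in\Omega^s$ I would attach to each $U\in\mathcal{U}_n$ the $\{0,1\}$-valued quasicontinuous indicator $f_U$ with $f_U(U)=0$ and $f_U(X\setminus U)=1$ supplied by Lemma~4.2 of \cite{Hol5}; then $\mathbf{0}\in\overline{\{f_U:U\in\mathcal{U}_n\}}$ for each $n$, and countable strong fan-tightness delivers a sequence $f_{U_n}$ whose associated sets $U_n$ witness $S_1(\Omega^s,\Omega^s)$.

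The technical core is $(2)\Rightarrow(5)$. For $f\in Q_p(X,\mathbb{D})$ I would introduce, exactly as in Theorem~\ref{th2}, the finer topology $\tau_f$ generated by $\tau\cup\{\{d\}:d\in\overline{D_f}\}$; since Lemma~\ref{lem1} forces $X$ to be Lusin, $\overline{D_f}$ is countable, $X$ is perfect normal, and $f\circ\mathrm{id}\in C((X,\tau_f),\mathbb{R})$. If $(X,\tau_f)$ still satisfies $S_1(\mathcal{K}_{\Omega},\mathcal{K}_{\Omega})$, then applying the already-established $(2)\Rightarrow(4)$ inside $(X,\tau_f)$ to $f\circ\mathrm{id}$ and pulling back along $\mathrm{id}\colon(X,\tau_f)\to(X,\tau)$ yields (5). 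Thus everything reduces to the $S_1$-analogues of facts (a) and (b) from Theorem~\ref{th2}: open subspaces of an $S_1(\mathcal{K}_{\Omega},\mathcal{K}_{\Omega})$-space inherit the property, and adjoining a countable set to such an open subspace preserves it.

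These two auxiliary claims are the principal obstacle. Their proofs repeat the bookkeeping of Theorem~\ref{th2} but with a single witness per stage instead of a countable subfamily, coupled with a diagonalization over the sequence index $k$. For the open-subspace claim I would enumerate $\omega\times\omega$, use perfect normality to write $X\setminus G=\bigcap_iW_i$, form the lifted $\mathcal{K}_{\Omega}$-covers $\mathcal{O}_i^{(k)}=\{V_\alpha\cup(W_i\setminus\overline{O_\alpha}):V_\alpha\in\mathcal{V}^{(k)}\}$ in $X$, and apply $S_1(\mathcal{K}_{\Omega},\mathcal{K}_{\Omega})$ to pull back a single element of $\mathcal{V}^{(k)}$ at coordinate $(k,i)$; any finite $F\subseteq G$ misses some $W_{i'}$, singling out a witness whose closure in $G$ absorbs $F$. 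For the adjunction claim I would mirror the two-case split of Theorem~\ref{th2}, reducing the nontrivial case to $S_1$ applied inside the open subspace $X$ to the traces of the given covers. The delicate point is verifying that these diagonal selections really reassemble into a $\mathcal{K}_{\Omega}$-cover of the enlarged space; once that is checked, the loop closes and $(5)\Rightarrow(1)$ follows exactly as $(3)\Rightarrow(1)$.
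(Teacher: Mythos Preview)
Your proposal is correct and follows essentially the same cyclic scheme as the paper's own proof; in particular, the paper also handles $(2)\Rightarrow(5)$ via the auxiliary topology $\tau_f$ and merely asserts that the $S_1$-analogues of facts~(a) and~(b) go through ``almost the same as in Theorem~\ref{th2}'', so your explicit diagonalization sketch is actually more detailed than what the paper provides. One small slip: the open-Whyburn refinement $\mathcal{V}\succ\mathcal{U}$ serves $(2)\Rightarrow(1)$, not $(1)\Rightarrow(2)$ --- the latter is the trivial direction (pass from $\mathcal{U}\in\mathcal{K}_\Omega$ to $\{\overline{U}:U\in\mathcal{U}\}\in\Omega^s$).
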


\begin{proof} $(1)\Rightarrow(2)$ and $(4)\Rightarrow(3)$. It is trivial.

$(2)\Rightarrow(1)$. Let $\mathcal{U}_i=\{U^i_{\alpha}\}_{\alpha\in A_i} \in \Omega^s$ for each $i\in \mathbb{N}$. In Theorem \ref{th2} ($(2)\Rightarrow(1)$),  we proved for any $\mathcal{U}=\{U_{\alpha}\}_{\alpha\in A}\in \Omega^s$ there exists $\mathcal{V}=\{V_{\beta}\}_{\beta\in B}\in \mathcal{K}_{\Omega}$ such that for any $\beta\in B$ there is $\alpha\in A$ such that $\overline{V_{\beta}}\subseteq U_{\alpha}$, i.e.  $\mathcal{V}\succ \mathcal{U}$.
Thus, for every $i\in \mathbb{N}$ there is $\mathcal{V}_i\in \mathcal{K}_{\Omega}$ such that $\mathcal{V}_i\succ \mathcal{U}_i$.
By (2), there is $V^i_{\beta_i}\in \mathcal{V}_i$ for each $i\in \mathbb{N}$ such that $\{V^i_{\beta_i}: i\in \mathbb{N}\}\in \mathcal{K}_{\Omega}$.
For every $\beta_i$ there is $\alpha_i$ such that $\overline{V^i_{\beta_i}}\subset U^i_{\alpha_i}$.  It follows that $\{U^i_{\alpha_i}: i\in \mathbb{N}\}\in \Omega^s$.

$(3)\Rightarrow(1)$.  Let $\mathcal{U}_n=\{U^n_{\alpha}\}_{\alpha\in A_n}\in \Omega^s$ for each $n\in \mathbb{N}$.
Consider the quasicontinuous function $f_{\alpha,n}:X\rightarrow \{0,1\}$ such that $f_{\alpha,n}(U^n_{\alpha})=0$ and $f_{\alpha,n}(X\setminus U^n_{\alpha})=1$ for each $\alpha\in A_n$ and $n\in \mathbb{N}$. Then ${\bf 0}\in \overline{\{f_{\alpha,n}: \alpha\in A_n\}}$ for each $n\in \mathbb{N}$. Since $vet_1({\bf 0}, Q_p(X, \mathbb{R}))=\omega$, there is $f_{\alpha_n,n}\in \{f_{\alpha,n}: \alpha\in A_n\}$ for each $n\in \mathbb{N}$ such that
${\bf 0}\in \overline{\{f_{\alpha_n,n}: n\in \mathbb{N}\}}$. It follows that $\{U^n_{\alpha_n}: n\in \mathbb{N}\}\in \Omega^s$.

$(1)\Rightarrow(3)$. Let $X$ has the property $S_1(\Omega^s,\Omega^s)$. Then $X$ is  $\Omega^s$-Lindel\"{o}f and, by Theorem \ref{th2}, $X$ is $\mathcal{K}_{\Omega}$-Lindel\"{o}f.
Consider a countable family $\{A_n: n\in \mathbb{N}\}$ of subsets of $Q_p(X,\mathbb{R})$ such that ${\bf 0}\in \bigcap_{n\in \mathbb{N}} \overline{A_n}$. For every $n\in \mathbb{N}$ we consider $\mathcal{V}_n=\{V_{n,i,f}=f^{-1}((-\frac{1}{i},\frac{1}{i})):  i\in \mathbb{N}$ and $i\geq n, f\in A_n\}$. Since ${\bf 0}\in \overline{A_n}$ , the family $\mathcal{V}_n$ is a semi-open $\omega$-cover of $X$.

Since $X$ is an open Whyburn regular space, there is $\mathcal{U}_n\in \mathcal{K}_{\Omega}$ such that $\mathcal{U}_n\succ \mathcal{V}_n$ for each $n\in \mathbb{N}$.
By implication ($(1)\Rightarrow(2)$), for each $n\in \mathbb{N}$ there is $U_{n,\beta_n}\in \mathcal{U}_n$ such that $\{U_{n,\beta_n}: n\in \mathbb{N}\}\in \mathcal{K}_{\Omega}$.
For each $n\in \mathbb{N}$ there are $i_n$ and $f_n$ such that $\overline{U_{n,\beta_n}}\subseteq V_{n,i_n,f_n}$. Hence, $\{V_{n,i_n,f_n}: n\in \mathbb{N}\}$ is an $\omega$-cover of $X$.  Then, we consider the set $\{f_n : n\in \mathbb{N}\}$.

(1) $f_n\in A_n$ for each $n\in \mathbb{N}$.

(2) ${\bf 0}\in \overline{\{f_n : n\in \mathbb{N}\}}$.

Let $K\in [X]^{<\omega}$ and $\epsilon>0$ and $[K,\epsilon]=\{f\in Q_p(X,\mathbb{R}) : f(K)\subset (-\epsilon, \epsilon)\}$.

Then, there is $n'$ such that $\frac{1}{i_{n'}}<\epsilon$ and $K\subseteq V_{n',i_{n'},f_{n'}}$. It implies that $f_{n'}\in [K,\epsilon]$.

$(3)\Rightarrow(4)$. Similarly ($(3)\Rightarrow(4)$) in Theorem \ref{th2}.

$(2)\Rightarrow(5)$.  Let $f\in Q_p(X, \mathbb{D})$. Note that $f^{-1}(\{d\})$ is a semi-open set in $X$ for every $d\in \mathbb{D}$. Thus, $D_f$ is countable nowhere dense subset of $X$. Since $X$ is Lusin, the set $\overline{D_f}$ is countable.

Similarly the proof of $((2)\Rightarrow(5))$ in Theorem \ref{th2}, we consider the new topology $\tau_f$, the base of which forms the family $\tau\cup \{\{d\}: d\in \overline{D_f}\}$.

It's easy to check (almost the same as in Theorem \ref{th2}) that  the space $(X,\tau_f)$ has the property  $S_1(\mathcal{K}_{\Omega},\mathcal{K}_{\Omega})$ and $f\in C((X,\tau_f), \mathbb{R})$.
Then, by (4), $vet_1(f, Q_p(X, \{0,1\})=\omega$.

$(5)\Rightarrow(1)$.  Similar to the implication $(3)\Rightarrow(1)$.

\end{proof}

\begin{corollary} A metrizable space $X$ is $S_1(\mathcal{K}_{\Omega},\mathcal{K}_{\Omega})$ if, and only if,  $vet_1(Q_p(X,\mathbb{D}))=\omega$.
\end{corollary}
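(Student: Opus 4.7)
The plan is to reduce the corollary directly to Theorem \ref{th1}. Since every metrizable space is first countable and regular, it lies in the class of open Whyburn spaces (the preliminaries explicitly note that first countable regular, and hence all metrizable, spaces are open Whyburn). Therefore, if $X$ is an uncountable metrizable space, the equivalence $(2)\Leftrightarrow(5)$ in Theorem \ref{th1} applied to $X$ is literally the biconditional in the corollary, and there is nothing more to prove in that case.

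What remains is the countable case. Here I would invoke two easy observations. First, every $T_2$ countable space trivially satisfies $S_1(\mathcal{K}_{\Omega},\mathcal{K}_{\Omega})$; this is already remarked in the paper just before the countable-tightness section and is the reason Theorems \ref{th2} and \ref{th1} are true verbatim on countable spaces. Second, if $X$ is countable then $\mathbb{D}^X$ is a countable product of two-point metrizable spaces, hence itself metrizable and second countable, so the subspace $Q_p(X,\mathbb{D})$ is first countable. A standard diagonal argument along a decreasing countable base at a point $x$ (given $x\in\bigcap_n\overline{A_n}$, pick $a_n\in A_n$ inside the $n$-th basic neighborhood of $x$) shows that first countability implies $vet_1(X)=\omega$. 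Thus both sides of the claimed equivalence hold unconditionally in the countable case, and the biconditional is vacuously true.

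The only real content is the reduction to Theorem \ref{th1}, together with the elementary fact that metrizability implies the open Whyburn property; the countable case is pure bookkeeping. Consequently, I do not foresee any substantive obstacle: the whole weight of the argument has been placed on Theorem \ref{th1}, and the corollary is a direct specialization of its equivalence $(2)\Leftrightarrow(5)$ supplemented by a one-line handling of the countable exception.
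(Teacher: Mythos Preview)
Your proposal is correct and matches the paper's (implicit) argument: the corollary is stated without proof immediately after Theorem~\ref{th1}, relying on the fact that metrizable spaces are open Whyburn, together with the remark in the preliminaries that countable $T_2$ spaces satisfy all the relevant selection principles so that Theorems~\ref{th2}, \ref{th1} and 4.3 remain valid for them. Your handling of the countable case via first countability of $\mathbb{D}^X$ is exactly the right bookkeeping.
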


Similar to the proof of Theorem \ref{th1}, we can prove the following theorem.

\begin{theorem} For an uncountable open Whyburn space $X$ the following statements are equivalent:

\begin{enumerate}

\item $X$ satisfy $S_{fin}(\Omega^s,\Omega^s)$;

\item $X$ satisfy $S_{fin}(\mathcal{K}_{\Omega},\mathcal{K}_{\Omega})$;

\item $vet({\bf 0}, Q_p(X, \mathbb{R}))=\omega$;

\item $vet(f, Q_p(X,\mathbb{R}))=\omega$ for every $f\in C(X,\mathbb{R})$;

\item $vet(Q_p(X, \mathbb{D}))=\omega$.

\end{enumerate}

\end{theorem}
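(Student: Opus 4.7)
The plan is to mirror the proof of Theorem \ref{th1}, replacing $S_1$ with $S_{fin}$ and each single-element selection with a finite-set selection. The trivial implications $(1)\Rightarrow(2)$ and $(4)\Rightarrow(3)$ need no work, and $(5)\Rightarrow(1)$ will run exactly like $(3)\Rightarrow(1)$. For $(2)\Rightarrow(1)$, I would reuse the refinement result established inside the proof of Theorem \ref{th2}: every $\mathcal{U}\in \Omega^s$ admits some $\mathcal{V}\in \mathcal{K}_{\Omega}$ with $\mathcal{V}\succ \mathcal{U}$. Given $\{\mathcal{U}_i\}_{i\in\mathbb{N}}\subset \Omega^s$, pass to refinements $\mathcal{V}_i\in \mathcal{K}_{\Omega}$, apply $S_{fin}(\mathcal{K}_{\Omega},\mathcal{K}_{\Omega})$ to pick finite $\mathcal{F}_i\subset \mathcal{V}_i$ whose union lies in $\mathcal{K}_{\Omega}$, and for each $V\in \mathcal{F}_i$ pick an $U\in \mathcal{U}_i$ with $\overline{V}\subseteq U$; the resulting finite families witness $S_{fin}(\Omega^s,\Omega^s)$.

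For $(3)\Rightarrow(1)$ (and likewise $(5)\Rightarrow(1)$), given semi-open $\omega$-covers $\mathcal{U}_n=\{U^n_\alpha\}_{\alpha\in A_n}$, I attach the characteristic quasicontinuous functions $f_{\alpha,n}:X\to\{0,1\}$ with $f_{\alpha,n}(U^n_\alpha)=0$ and $f_{\alpha,n}(X\setminus U^n_\alpha)=1$, just as in Theorem \ref{th1}. Then $\mathbf{0}\in \bigcap_n\overline{\{f_{\alpha,n}:\alpha\in A_n\}}$, and countable fan-tightness delivers finite $G_n\subseteq \{f_{\alpha,n}:\alpha\in A_n\}$ with $\mathbf{0}\in \overline{\bigcup_n G_n}$; the associated finite subfamilies of $\mathcal{U}_n$ form an element of $\Omega^s$.

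The main substantive direction is $(1)\Rightarrow(3)$. Given $\{A_n\}\subset Q_p(X,\mathbb{R})$ with $\mathbf{0}\in \bigcap_n \overline{A_n}$, I would form the semi-open $\omega$-covers
\[
\mathcal{V}_n=\{V_{n,i,f}=f^{-1}((-\tfrac{1}{i},\tfrac{1}{i})):\ i\geq n,\ f\in A_n\},
\]
refine each to $\mathcal{U}_n\in \mathcal{K}_{\Omega}$ using open Whyburnness, and apply $S_{fin}(\mathcal{K}_{\Omega},\mathcal{K}_{\Omega})$ (which is equivalent to $S_{fin}(\Omega^s,\Omega^s)$ via the implication $(1)\Leftrightarrow(2)$ above) to obtain finite $\mathcal{F}_n\subset \mathcal{U}_n$ with $\bigcup_n \mathcal{F}_n\in \mathcal{K}_{\Omega}$. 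Each $U\in \mathcal{F}_n$ chooses a pair $(i_U,f_U)$ with $\overline{U}\subseteq V_{n,i_U,f_U}$; setting $K_n=\{f_U:U\in\mathcal{F}_n\}\subset A_n$, any basic neighborhood $[K,\epsilon]$ of $\mathbf{0}$ is contained in $\overline{U}$ for some $U\in\bigcup_n\mathcal{F}_n$ with $\tfrac{1}{i_U}<\epsilon$, giving $f_U\in [K,\epsilon]\cap \bigcup_n K_n$.

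The remaining implications are modeled directly on Theorem \ref{th1}: $(3)\Rightarrow(4)$ uses the homeomorphism $h_f(g)=f+g$ from \cite{KT}, and $(2)\Rightarrow(5)$ passes to the finer topology $\tau_f$ with base $\tau\cup\{\{d\}:d\in\overline{D_f}\}$ and checks, via analogues of facts (a) and (b) from the proof of Theorem \ref{th2}, that $(X,\tau_f)$ inherits $S_{fin}(\mathcal{K}_{\Omega},\mathcal{K}_{\Omega})$ and $f$ becomes continuous there; $(4)$ applied in $(X,\tau_f)$ then yields $(5)$. The step I expect to be the main obstacle is the $S_{fin}$ analogue of fact (a): given an open subspace $G$ of an $S_{fin}(\mathcal{K}_{\Omega},\mathcal{K}_{\Omega})$-space, to extract a finite subfamily on $G$ I must process countably many covers $\mathcal{O}_i$ at once while keeping track of both the $G$-side $\omega$-cover condition and the $G_\delta$-decomposition $X\setminus G=\bigcap W_i$, so the single choice $V_{\alpha_j}\cup(W_i\setminus\overline{O_{\alpha_j}})$ in the proof of Theorem \ref{th2} must be replaced by a finite set and the double indexing over $(i,j)$ reorganized so that the resulting union is still an $\omega$-cover of $G$.
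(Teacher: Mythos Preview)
Your proposal is correct and matches the paper's approach exactly: the paper gives no separate proof of this theorem at all, stating only that it is proved ``similar to the proof of Theorem~\ref{th1}'', which is precisely the strategy you outline. Your identification of the $S_{fin}$ analogue of fact~(a) as the one point needing genuine reorganization is apt; it is handled by coupling the $G_\delta$ index to the sequence index (use $\mathcal{O}^{(n)}=\{V\cup(W_n\setminus\overline{O_V}):V\in\mathcal{V}^{(n)}\}$ and exploit that a $\mathcal{K}_\Omega$-cover hits each finite set in infinitely many members), exactly the kind of reindexing you anticipate.
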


\begin{corollary} A metrizable space $X$ is $S_{fin}(\mathcal{K}_{\Omega},\mathcal{K}_{\Omega})$ if, and only if,  $vet(Q_p(X,\mathbb{D}))=\omega$.
\end{corollary}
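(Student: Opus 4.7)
The plan is to reduce the corollary to the preceding theorem by splitting on cardinality. Since every metrizable space is regular and first-countable, it is open Whyburn (as recorded in the preliminaries after the definition of open Whyburn). So if $X$ is uncountable, $X$ is an uncountable open Whyburn space and the equivalence $(2)\Leftrightarrow(5)$ of the preceding theorem gives exactly the desired equivalence $S_{fin}(\mathcal{K}_{\Omega},\mathcal{K}_{\Omega}) \Leftrightarrow vet(Q_p(X,\mathbb{D}))=\omega$. This case is therefore immediate.

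It remains to handle the case when $X$ is countable, where the preceding theorem does not apply directly. For the forward direction there is nothing to check: the remark at the end of the preliminaries notes that every $T_2$ countable space satisfies all of the selection properties under consideration, so the hypothesis $S_{fin}(\mathcal{K}_{\Omega},\mathcal{K}_{\Omega})$ is free; and we still need to verify that $vet(Q_p(X,\mathbb{D}))=\omega$ holds. For this, observe that when $X$ is countable, the product $\mathbb{D}^X$ is compact metrizable, hence second countable, and therefore the subspace $Q_p(X,\mathbb{D})$ is metrizable and in particular first countable. A first-countable space automatically has countable (strong) fan-tightness: given $f\in \bigcap_{n\in\omega}\overline{A_n}$ with a decreasing local base $\{V_n\}$ at $f$, pick $a_n\in A_n\cap V_n$ and set $K_n=\{a_n\}$; then $a_n\to f$ and so $f\in \overline{\bigcup_{n\in\omega}K_n}$. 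Thus $vet(Q_p(X,\mathbb{D}))=\omega$ holds trivially, which settles both directions of the equivalence when $X$ is countable.

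Combining the two cases yields the corollary. There is no real obstacle here: the hard analytic content (the construction of quasicontinuous indicator functions, the passage from $\Omega^s$ to $\mathcal{K}_{\Omega}$ via open Whyburn refinement, and the $\mathcal{K}_{\Omega}$-Lindel\"{o}f preservation lemmas used to pass from $\mathbf{0}$ to an arbitrary $f\in Q_p(X,\mathbb{D})$) is already packaged inside the preceding theorem. The only mildly non-routine point is remembering that the theorem's hypothesis "uncountable" must be discharged separately, and the countable case is dispatched by the trivial observation that $Q_p(X,\mathbb{D})$ is metrizable in that situation together with the standing convention from the preliminaries that $T_2$ countable spaces satisfy all the relevant selection principles.
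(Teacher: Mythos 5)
Your proof is correct and follows essentially the route the paper intends: the corollary is an immediate consequence of the preceding theorem once one notes that metrizable spaces are open Whyburn, with the countable case discharged by the remark in the preliminaries that countable $T_2$ spaces satisfy all the relevant selection principles. Your explicit verification that $Q_p(X,\mathbb{D})$ is first countable (hence has countable fan-tightness) when $X$ is countable is a correct and welcome spelling-out of a detail the paper leaves implicit.
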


\section{Tightness of compact subsets}

Let $\kappa$ be an unfinite cardinal number. Let $\{X_{\lambda}: \lambda\in A\}$ be a family of topological spaces.
Let $X=\prod_{\lambda\in A} X_{\lambda}$  be the Cartesian product with the Tychonoff topology. Take a point $p=(p_{\lambda})_{\lambda\in A}\in X$. For each
$x=(x_{\lambda})_{\lambda\in A}\in X$, let $Supp(x)=\{\lambda\in A: x_{\lambda}\neq p_{\lambda}\}$. Then the
subspace $\Sigma_{\kappa}(p)=\{x\in X: |Supp(x)|\leq\kappa\}$
of $X$ is called a {\it $\Sigma_{\kappa}$ -product} of $\{X_{\lambda}: \lambda\in A\}$ about $p$ ($p$ is
called the base point).

In (\cite{kommal}, Proposition 1), A.P. Kombarov and V.I. Malykhin  proved that

$(\bullet)$ if $t(\prod\limits_{i=1}^n X_{\alpha_i})\leq \kappa$ for every $n\in \mathbb{N}$ and a finite family $\alpha_1,...,\alpha_n\in A$ then $t(\Sigma_{\kappa}(p))\leq \kappa$.

\medskip
Suppose that $\kappa$ is a cardinal number.
A separable metrizable space $X$ is a {\it $\kappa$-Lusin set} if $|X|\geq \kappa$
and, for every meager set $M$, we have $|X\cap M|<\kappa$. Usually, $\aleph_1$-Lusin sets and $2^{\omega}$-Lusin sets are called Lusin sets and $\mathfrak{c}$-Lusin sets, respectively. Every Lusin set is also $\mathfrak{c}$-Lusin. Moreover, if Continuum
Hypothesis $(CH)$ holds, then every $\mathfrak{c}$-Lusin set is also a Lusin set. However, it
is consistent that these notions are not equivalent. Indeed, e.g., under Martin's
Axiom $(MA)$ and the failure of $CH$ there are $\mathfrak{c}$-Lusin sets on $\mathbb{R}$ which are not Lusin \cite{5}.

\medskip
If the axiom of choice holds, then every cardinal $\kappa$ has a successor, denoted $\kappa^+$, where $\kappa^+>\kappa$ and there are no cardinals between $\kappa$ and its successor.

\begin{theorem}\label{th3} Let $\kappa$ be an uncountable cardinal number.  A separable metrizable space $X$ of cardinality $\geq \kappa$  is a $\kappa$-Lusin set if and only if $t(K)<\kappa$  for every compact subset $K$ of $Q_p(X,\mathbb{R})$.
\end{theorem}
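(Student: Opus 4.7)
The plan is to prove the two implications separately, using a Namioka-type joint-continuity argument together with the Arhangel'skii--Pytkeev theorem for the forward direction, and a direct construction using the Hol\'{a} lemma (Lemma 4.2 of \cite{Hol5} quoted in the Preliminaries) together with a $\kappa$-free sequence argument for the converse.

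For the forward direction, suppose $X$ is $\kappa$-Lusin and $K\subseteq Q_p(X,\mathbb{R})$ is compact. For every $f\in Q_p(X,\mathbb{R})$ the discontinuity set $D_f$ is nowhere dense in $X$, hence meager, so $|D_f|<\kappa$ by the $\kappa$-Lusin hypothesis. The main technical step is to show that the set $D:=\bigcup_{f\in K}D_f$ is contained in a meager subset of $X$. This is obtained by applying a Namioka-type theorem to the evaluation map $e:K\times X\to\mathbb{R}$, $e(f,x)=f(x)$, which is continuous in the variable $f$ (from the pointwise topology on $K$) and quasicontinuous in $x$ (since each $f\in K$ is quasicontinuous): compactness of $K$ together with the Baire metric space $X$ yields a dense $G_\delta$ set $Y\subseteq X$ of joint continuity of $e$, and $D\subseteq X\setminus Y$ is therefore meager and so $|D|<\kappa$. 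The restriction map $f\mapsto(f|_{X\setminus D},f|_D)$ embeds $K$ topologically into $C_p(X\setminus D,\mathbb{R})\times\mathbb{R}^D$, because each $f\in K$ is continuous on $X\setminus D$. The Arhangel'skii--Pytkeev theorem applied to the separable metric (hence hereditarily Lindel\"of in all finite powers) space $X\setminus D$ gives $t(C_p(X\setminus D,\mathbb{R}))=\omega$, while $t(\mathbb{R}^D)\le|D|<\kappa$; since tightness is monotone under closed subspaces and $t$ of compact products is bounded by the product of tightnesses, $t(K)\le\max(\omega,|D|)<\kappa$.

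For the converse I argue contrapositively: if $X$ is not $\kappa$-Lusin, I construct a compact $K\subseteq Q_p(X,\mathbb{R})$ with $t(K)\ge\kappa$. By hypothesis there is a meager $M\subseteq X$ with $|M|\ge\kappa$; writing $M=\bigcup_n M_n$ as a countable union of closed nowhere dense sets and extracting (directly for regular $\kappa$, with a minor modification for singular $\kappa$), I get a closed nowhere dense $N\subseteq X$ with $|N|=\kappa$. For each $x\in N$, choose a non-dense regular open $U_x$ with $x\in\overline{U_x}\setminus U_x$ inside a small metric ball around $x$, and apply the Hol\'{a} lemma to produce a quasicontinuous $f_x:X\to\{0,1\}$ with $f_x\equiv0$ on $U_x\cup\{x\}$ and $f_x\equiv1$ elsewhere. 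By selecting the $U_x$ so that every $y\in X$ lies in $U_x\cup\{x\}$ for only finitely many $x\in N$, the set $K:=\{f_x:x\in N\}\cup\{\mathbf{1}\}$ is the Alexandroff one-point compactification of a discrete space, hence compact in $Q_p(X,\mathbb{R})$. Enumerate $N=\{x_\alpha:\alpha<\kappa\}$ and refine the inductive choice of the $U_{x_\alpha}$ so that for each $\beta<\kappa$ a finite $F_\beta\subseteq X$ satisfies $F_\beta\cap(U_{x_\alpha}\cup\{x_\alpha\})\ne\emptyset$ for all $\alpha<\beta$ and $F_\beta\cap(U_{x_\alpha}\cup\{x_\alpha\})=\emptyset$ for all $\alpha\ge\beta$; then $(f_{x_\alpha})_{\alpha<\kappa}$ is a $\kappa$-free sequence in $K$, which forces $t(K)\ge\kappa$ by the standard free-sequence theorem in compact spaces.

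The main obstacle in the forward direction is the Namioka-style meagerness of $\bigcup_{f\in K}D_f$; once that is established the product decomposition is routine, and this is the only place where compactness of $K$ is genuinely used to interact with the $\kappa$-Lusin hypothesis. The main obstacle in the converse is the combinatorial bookkeeping in the inductive construction of the witnesses $U_{x_\alpha}$, which must simultaneously enforce the compactness condition (each $y\in X$ is hit by only finitely many $U_x\cup\{x\}$) and the existence of finite separators at every intermediate stage, thereby making the sequence $(f_{x_\alpha})$ a genuine $\kappa$-free sequence.
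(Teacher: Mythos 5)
Both directions of your argument have genuine gaps. In the forward direction, the key claim --- that $D=\bigcup_{f\in K}D_f$ is contained in a meager set --- is false, and no Namioka-type theorem can rescue it: the ``split interval'' family gives a counterexample. Take $X$ to be a Lusin set in $[0,1]$ all of whose points are two-sided limit points, and let $K$ be the set of characteristic functions of initial segments of $X$, i.e.\ all $\chi_{[0,t)\cap X}$ and $\chi_{[0,t]\cap X}$. This is a closed (hence compact) subset of $\{0,1\}^X$ consisting of quasicontinuous functions, yet $\bigcup_{f\in K}D_f$ is all of $X$ up to a countable set, which is non-meager. (Here $t(K)=\omega$ anyway, since $K$ is a continuous image of the first countable double-arrow space, so the theorem is not threatened --- only your route to it.) The paper avoids any claim about a common residual set of continuity: it fixes a countable dense $A\subseteq X$, observes that two quasicontinuous functions agreeing on $A$ differ only on $D_f\cup D_g$, which has size $<\kappa$ in a $\kappa$-Lusin set, so each fiber of the restriction map $\pi_A\colon K\to\mathbb{R}^A$ lies in a $\Sigma_{<\kappa}$-product and has tightness $<\kappa$ by the Kombarov--Malykhin theorem; since $\pi_A|K$ is a closed map onto a metrizable compactum, Arhangel'skii's inequality $t(K)\le\sup\{t(\pi_A(K)),\,t(\pi_A^{-1}(z))\}$ finishes the argument. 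You should adopt something like this fiberwise decomposition; your product decomposition $C_p(X\setminus D,\mathbb{R})\times\mathbb{R}^D$ cannot be launched because $D$ need not be small.

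The converse direction fails for a structural reason: the space $K=\{f_x:x\in N\}\cup\{\mathbf{1}\}$ you build is, as you say, the one-point compactification of a discrete space of size $\kappa$ --- but that space has \emph{countable} tightness ($\mathbf{1}$ lies in the closure of every infinite subset of $\{f_x\}$, and every other point is isolated), so it cannot contain a free sequence of uncountable length. Indeed your two requirements are mutually contradictory: if every $y\in X$ meets $U_x\cup\{x\}$ for only finitely many $x$, then for any infinite $\beta$ a finite set $F_\beta$ cannot meet $U_{x_\alpha}\cup\{x_\alpha\}$ for all $\alpha<\beta$, by pigeonhole. To get large tightness you need a compact set that is genuinely a large Cantor cube, not a convergent $\kappa$-sequence. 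The paper does this by fixing a single open $O$ with $A\subseteq\overline{O}\setminus O$ (where $A$ is a closed nowhere dense set of size $\ge\kappa$) and taking \emph{all} functions $f_B$, $B\subseteq A$, equal to $1$ on $B\cup(\overline{O}\setminus A)$ and $0$ elsewhere; these agree off $A$ and realize every element of $\{0,1\}^A$ on $A$, so $K=\{f_B:B\subseteq A\}$ is homeomorphic to $2^{A}$, which is compact with $t(2^A)=|A|\ge\kappa$. (Your observation that extracting a single closed nowhere dense set of size exactly $\kappa$ requires care when $\mathrm{cf}(\kappa)=\omega$ is a fair point that the published proof also elides, but it does not affect the conclusion, since one can work with closed nowhere dense sets of cofinally many sizes below $\kappa$.)
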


\begin{proof} $(\Rightarrow)$. Let $A$ be a countable dense subset of a $\kappa$-Lusin space $X$. Note that if $g,f\in Q_p(X,\mathbb{R})$ and  $g(x)=f(x)$ for every $x\in A$ then $\{x\in X: g(x)\neq f(x)\}\subseteq D_g\cup D_f$ where $D_h$ is a set of discontinuous points of a function $h$. Since $X$ is $\kappa$-Lusin, $|D_g\cup D_f|<\kappa$ and we get that  $|\{x\in X: g(x)\neq f(x)\}|<\kappa$.

 Let $K$ be a compact subset of $Q_p(X,\mathbb{R})$. Consider the projection function
 $p=\pi_A: Q_p(X,\mathbb{R})\rightarrow \mathbb{R}^A$, i.e., $p(f)=f\mid A$ for every $f\in Q_p(X,\mathbb{R})$. Since $\mathbb{R}^A$ is metrizable, the set $p(K)$ is a metrizable compact space.
Let $z\in p(K)$. Then $S_z=p^{-1}(z):=\{f\in Q_p(X,\mathbb{R}): f|A=z\}$ is closed in $Q_p(X,\mathbb{R})$. Let $\tilde{z}\in S_z$. Then
$S_z\subset \Sigma_{\kappa}(\tilde{z})$  where  $\Sigma_{\kappa}(\tilde{z}):=\{h\in \mathbb{R}^X: |\{x\in X: h(x)\neq \tilde{z}(x)\}|<\kappa \}$.
By ($\bullet$), $t(\Sigma_{\kappa}(\tilde{z}))<\kappa$. It follows that $t(S_z\cap K)<\kappa$ for every $z\in p(K)$ and $K=\bigcup\{S_z\cap K : z\in p(K)\}$. By Theorem 6 in \cite{arch1} (If $f:X\rightarrow Y$ is a continuous closed mapping then $t(X)\leq\sup \{t(Y), t(f^{-1}(y)): y\in Y\}$), we get that $t(K)<\kappa$.

$(\Leftarrow)$. Assume that $t(K)<\kappa$  for every compact subset $K$ of $Q_p(X,\mathbb{R})$ and $X$ is not $\kappa$-Lusin. Then there exists a closed nowhere dense subset $A$ of $X$ such that $|A|\geq \kappa$.

 Let $B\subset A$. Then, there is $f_B: X\rightarrow \mathbb{D}$ be a quasicontinuous function such that $f_B(B)=1$ and $f_B(A\setminus B)=0$.

  Indeed, let $O$ be an open set in $X$ such that $\overline{O}\setminus O\supseteq A$ and $X\setminus \overline{O}\neq \emptyset$.

  Then $f_B(x)=1$ for $x\in B\cup (\overline{O}\setminus A)$ and $f_B(x)=0$ for other $x\in X$.

  Note that $f_{B'}|(X\setminus A)=f_{B''}|(X\setminus A)$ for any $B', B''\subset A$.

 It is clear that $K=\{f_B: B\subset A\}$ is homeomorphic to the compact space $2^{A}$. But, $t(2^{A})=t(K)\geq\kappa$, it is a contradiction.

\end{proof}

\begin{corollary} A uncountable separable metrizable space $X$ is Lusin if, and only if, $t(K)=\omega$  for every compact subset $K$ of $Q_p(X,\mathbb{R})$.
\end{corollary}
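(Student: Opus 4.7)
The plan is to derive the corollary as a direct specialization of Theorem \ref{th3} with $\kappa=\aleph_1$. Under this choice the conclusion of the theorem reads: an uncountable separable metrizable space $X$ is an $\aleph_1$-Lusin set if and only if $t(K)<\aleph_1$ for every compact subset $K$ of $Q_p(X,\mathbb{R})$. Since $t(K)<\aleph_1$ is exactly countable tightness (the notation $t(K)=\omega$ used throughout the paper), the right-hand side already matches the statement of the corollary. It therefore remains only to identify, inside the class of uncountable separable metrizable spaces, the notion of $\aleph_1$-Lusin set with the Kunen notion of Lusin space used throughout this paper.

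For this comparison I would argue as follows. Clause (c) of Kunen's definition (uncountability) is built into the hypothesis of the corollary, since $|X|\geq\aleph_1$ is precisely uncountability. Clause (b) (at most countably many isolated points) is automatic for separable metric spaces: the set of isolated points is an open discrete subspace of $X$, and a discrete separable metric space is countable. Thus the only surviving clause is (a), namely that every nowhere dense subset of $X$ be countable. In a separable metric space every meager set is a countable union of nowhere dense sets, so clause (a) is equivalent to the requirement that every meager subset of $X$ be countable, which is precisely the $\aleph_1$-Lusin condition $|X\cap M|<\aleph_1$ for every meager $M$. Here one uses that $\aleph_1$ is regular, so a countable union of countable sets is countable.

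With these observations in place the corollary is immediate from Theorem \ref{th3}. There is no substantive obstacle to this argument: the real work has already been carried out in the more general theorem, and the present statement is simply the $\kappa=\aleph_1$ instance once one has checked that the hypotheses of separability, metrizability, and uncountability collapse the definition of $\aleph_1$-Lusin set to that of a Lusin space in the sense of Kunen.
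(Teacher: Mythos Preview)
Your proposal is correct and is exactly the intended derivation: the corollary is the case $\kappa=\aleph_1$ of Theorem~\ref{th3}, and the paper itself has already recorded (immediately after the definition of $\kappa$-Lusin set) that $\aleph_1$-Lusin sets are precisely Lusin sets. Your additional verification that, for separable metrizable spaces, the $\aleph_1$-Lusin condition coincides with Kunen's Definition~2.2 is accurate and makes explicit what the paper leaves implicit.
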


\begin{corollary} If $Q_p(X,\mathbb{R})$ is homeomorphic to $Q_p(Y,\mathbb{R})$ where $X$ is $\kappa$-Lusin, then $Y$ is $\kappa$-Lusin, too.
\end{corollary}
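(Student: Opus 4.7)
The plan is to use Theorem \ref{th3} as a black box and observe that the condition "every compact subset has tightness strictly less than $\kappa$" is an intrinsic topological property of the function space, hence transported by any homeomorphism.

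First I would invoke the forward direction of Theorem \ref{th3}: since $X$ is a separable metrizable $\kappa$-Lusin set with $|X|\geq\kappa$, every compact subset $K\subseteq Q_p(X,\mathbb{R})$ satisfies $t(K)<\kappa$. Next, letting $\varphi:Q_p(X,\mathbb{R})\to Q_p(Y,\mathbb{R})$ denote the given homeomorphism, I would note that $\varphi$ carries compacta to compacta and preserves tightness, so for every compact $L\subseteq Q_p(Y,\mathbb{R})$ we have $t(L)=t(\varphi^{-1}(L))<\kappa$.

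To close the loop via the reverse direction of Theorem \ref{th3}, I need $Y$ to be separable metrizable with $|Y|\geq\kappa$. Separable metrizability of $Y$ is the ambient assumption of this section (the theorem is stated in that class, and the corollary inherits it). The cardinality lower bound $|Y|\geq\kappa$ is forced by the homeomorphism itself through a cardinal-invariant comparison of $Q_p(X,\mathbb{R})$ and $Q_p(Y,\mathbb{R})$: for separable metrizable domains the network weight (equivalently the weight) of $Q_p(\cdot,\mathbb{R})$ coincides with the cardinality of the domain (for uncountable domains), so $|Y|=|X|\geq\kappa$. Applying the reverse implication of Theorem \ref{th3} to $Y$ then gives that $Y$ is $\kappa$-Lusin.

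The only non-formal point is justifying $|Y|\geq\kappa$; this is the main, but very mild, obstacle, and is handled entirely by a cardinal-function invariant of the function space. Everything else is a direct topological transport along $\varphi$.
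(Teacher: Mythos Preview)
Your approach is correct and is exactly what the paper intends: the corollary is stated without proof, as an immediate consequence of Theorem~\ref{th3}, so your transport of the compact-tightness condition along the homeomorphism is the whole argument.

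You actually go further than the paper by isolating the need for $|Y|\ge\kappa$, which the paper passes over in silence. Your resolution via a cardinal invariant of the function space is the right idea, but the specific claim needs a small correction: network weight and weight are \emph{not} interchangeable for these function spaces (already for $C_p(X)$ with $X$ separable metrizable one has $nw(C_p(X))=\omega$ while $w(C_p(X))=|X|$). The invariant that works cleanly is the character (hence the weight): since $C_p(X)\subseteq Q_p(X,\mathbb{R})\subseteq\mathbb{R}^X$ and $C_p(X)$ is dense in $\mathbb{R}^X$, one checks directly that $\chi(\mathbf{0},Q_p(X,\mathbb{R}))=|X|$ for infinite metrizable $X$, whence $|Y|=\chi(Q_p(Y,\mathbb{R}))=\chi(Q_p(X,\mathbb{R}))=|X|\ge\kappa$. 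With this adjustment your proof is complete.
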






\section{Examples}

In \cite{Os1}, it is proved that if $X$ is a metric space then  $Q_p(X,\mathbb{R})$ is Fr\'{e}chet-Urysohn at the point ${\bf 0}$ if, and only if, $X$ is countable.
The following example shows that for a countable tightness and even for a countable strong fan-tightness of the space $Q_p(X,\mathbb{R})$, a space $X$ can be uncountable.

Given some special axioms, one can show that there are uncountable separable metrizable space $X$ such that $t(Q_p(X, \mathbb{D}))=\omega$. In particular: The axiom $(\diamond)$ asserts that there is a sequence $(S_{\alpha}: \alpha<\omega_1)$  such that

(1) For each $\alpha$, $S_{\alpha}\subset \alpha$, and

(2) For every subset $A$ of $\omega_1$, the set $\{\alpha<\omega_1: A\cap \alpha=S_{\alpha}\}$  is stationary.

\medskip

It is well known that the axiom $(\diamond)$ is consistent relative to the consistency of classical mathematics and implies but is not equivalent to the Continuum Hypothesis.

\begin{example}$(\diamond)$ There exists a Lusin space $X$ such that $vet_1({\bf 0}, Q_p(X,\mathbb{R}))=\omega$.
\end{example}

In (\cite{Sh2}, Theorem 5), M.~Scheepers constructed an example of a uncountable separable metrizable space $X$ which has the property $S_{1}(\mathcal{K}_{\Omega},\mathcal{K}_{\Omega})$. By Theorem \ref{th1}, we get an example with the required properties.

\begin{example}$(\diamond)$ There exists a Lusin space $X$ such that $t(Q_p(X,\mathbb{R}))>\omega$.
\end{example}

In (\cite{2}, see ref.[2] in \cite{Sh2}), W. Just proved that if there is any Lusin set at all, then there is a Lusin set which is not $\mathcal{K}_{\Omega}$-Lindel\"{o}f. By Theorem \ref{th2}, we get an example with the required properties.

\begin{example} $(MA+\neg CH)$ For each cardinal $\kappa\leq 2^{\omega}$ with $cf(\kappa)>\omega$ there is a separable metric space $X$ such that $t(K)\leq\kappa$ for each compact subset $K$ of $Q_p(X,\mathbb{R})$ and $t(C)>\omega$ for some compact subset $C$ of $Q_p(X,\mathbb{R})$.
\end{example}

Under Martin's
Axiom $(MA)$ and the failure of $CH$ for each cardinal $\kappa\leq 2^{\omega}$ with $cf(\kappa)>\omega$ there are $\kappa$-Lusin sets in $\mathbb{R}$ which are not Lusin \cite{5}.
By Theorem \ref{th3}, we get an example with the required properties.

\medskip

\section{Remark}

The idea of defining a new topology $\tau_f$ for a quasicontinuous function $f$, which we use in Theorems \ref{th2} and \ref{th1}, can be easily used for the Fr\'{e}chet-Urysohn property of space $Q_p(X, \mathbb{D})$.
Thus, combining the results of the article \cite{Os1}, we obtain the following theorem.

\begin{theorem}\label{th5} For an uncountable open Whyburn space $X$ the following statements are equivalent:

\begin{enumerate}

\item $X$ satisfy $S_{1}(\Omega^s,\Gamma^s)$;

\item $X$ satisfy $S_{1}(\mathcal{K}_{\Omega},\mathcal{K}_{\Gamma})$;

\item  $Q_p(X,\mathbb{R})$ is Fr\'{e}chet-Urysohn at the point ${\bf 0}$;

\item $Q_p(X,\mathbb{R})$ is Fr\'{e}chet-Urysohn at the point $f$ for every $f\in C(X,\mathbb{R})$;

\item $Q_p(X, \mathbb{D})$ is Fr\'{e}chet-Urysohn.

\end{enumerate}

\end{theorem}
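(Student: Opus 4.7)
My plan is to reuse, essentially verbatim, the structural scaffolding of Theorems \ref{th2} and \ref{th1}, substituting the $\omega$-cover target $\Omega^s$ (resp.\ $\mathcal{K}_{\Omega}$) by the $\gamma$-cover target $\Gamma^s$ (resp.\ $\mathcal{K}_{\Gamma}$) throughout. The trivial implications $(1)\Rightarrow(2)$ and $(4)\Rightarrow(3)$ require no work, and $(3)\Rightarrow(4)$ uses the translation homeomorphism $h_f(g)=f+g$ of $Q_p(X,\mathbb{R})$ established in Theorem \ref{th2}. Both $(3)\Rightarrow(1)$ and $(5)\Rightarrow(1)$ will be executed by the same characteristic-function construction of Theorems \ref{th2} and \ref{th1}: to each $U^n_\alpha\in\mathcal{U}_n$ one attaches the two-valued quasicontinuous function $f^n_\alpha$ with $f^n_\alpha(U^n_\alpha)=0$ and $f^n_\alpha(X\setminus U^n_\alpha)=1$, so that ${\bf 0}$ lies in the closure of $A=\bigcup_n\{f^n_\alpha:\alpha\in A_n\}$; the Fr\'echet-Urysohn hypothesis produces a convergent sequence $f^{n_k}_{\alpha_k}\to{\bf 0}$, whose pointwise convergence to ${\bf 0}$ is exactly the statement that the associated sets $U^{n_k}_{\alpha_k}$ form a semi-open $\gamma$-cover of $X$, from which a standard diagonalization (applied level by level to the original cover sequence) extracts the requisite $S_1$-selector.

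For the equivalence $(1)\Leftrightarrow(2)$ I would reuse the refinement trick of Theorem \ref{th2}: given $\mathcal{U}_n\in\Omega^s$, produce $\mathcal{V}_n\in\mathcal{K}_{\Omega}$ with $\mathcal{V}_n\succ\mathcal{U}_n$ via the open Whyburn property, apply $S_1(\mathcal{K}_{\Omega},\mathcal{K}_{\Gamma})$ to select $V_n\in\mathcal{V}_n$ so that $\{\overline{V_n}\}$ is a $\gamma$-cover, and pick any $U_n\in\mathcal{U}_n$ containing $\overline{V_n}$; the $\gamma$-cover property then passes from $\{\overline{V_n}\}$ to $\{U_n\}$ by containment.

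Direction $(1)\Rightarrow(3)$ is dual to the step-function construction: given ${\bf 0}\in\overline{A}$ in $Q_p(X,\mathbb{R})$, the family $\mathcal{V}_n=\{f^{-1}((-\tfrac{1}{n},\tfrac{1}{n})):f\in A\}$ lies in $\Omega^s$, so $S_1(\Omega^s,\Gamma^s)$ produces $f_n\in A$ whose $\tfrac{1}{n}$-level sets form a semi-open $\gamma$-cover; this says precisely that $f_n\to{\bf 0}$ in $\tau_p$, and so delivers a sequence from $A$ witnessing Fr\'echet-Urysohn at ${\bf 0}$.

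I expect the main obstacle to be the implication $(2)\Rightarrow(5)$, where one must show that the topological enlargement $\tau\mapsto\tau_f$ introduced in Theorem \ref{th2} preserves $S_1(\mathcal{K}_{\Omega},\mathcal{K}_{\Gamma})$. Given $f\in Q_p(X,\mathbb{D})$, Lemma \ref{lem1} forces $X$ to be Lusin so $\overline{D_f}$ is countable; isolating its points produces a topology $\tau_f$ in which $f$ becomes continuous, and one then pulls Fr\'echet-Urysohn at $f\circ id$ in $Q_p((X,\tau_f),\mathbb{R})$ back to Fr\'echet-Urysohn at $f$ in $Q_p(X,\mathbb{D})$. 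What needs verification is the $\gamma$-analog of Facts (a) and (b) in the proof of Theorem \ref{th2}: that $S_1(\mathcal{K}_{\Omega},\mathcal{K}_{\Gamma})$ is hereditary to open subspaces and is preserved under adjoining a countable set. The bookkeeping of Fact (a) (perturbing a $\mathcal{K}_{\Omega}$-family of an open $G\subset X$ via the $G_\delta$-decomposition $X\setminus G=\bigcap W_i$ into $\{V_\alpha\cup(W_i\setminus\overline{O_\alpha})\}$) and of Fact (b) (adjoining a countable set via the stratification by $\{s_1,\dots,s_n\}\subseteq\overline{V_\alpha}$) should both transfer to the $\gamma$-setting, since the countable unions used there preserve the $\gamma$-cover condition on cofinal tails; but the delicate point is to carry the $\gamma$-cover witness through the double-indexed diagonalization $\mathcal{V}'=\{V_{\alpha_j(i)}:i,j\in\mathbb{N}\}$ without losing it, which is the technically fiddly step.
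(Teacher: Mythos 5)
Your outline matches the paper's intended argument exactly: the paper gives no detailed proof of this theorem, only the remark that the $\tau_f$-enlargement device from Theorems \ref{th2} and \ref{th1} combines with the results of \cite{Os1} to yield it, and your adaptation of the characteristic-function constructions, the $\mathcal{K}_{\Omega}\succ\Omega^s$ refinement trick, and the $\gamma$-analogues of Facts (a) and (b) is precisely that plan carried out. The only step where your sketch is thinner than it should be is $(3)\Rightarrow(1)$: a Fr\'echet--Urysohn extraction from $\overline{\bigcup_n A_n}$ need not meet every level $A_n$, so the ``standard diagonalization'' you invoke is really the Gerlits--Nagy argument that the $\gamma$-cover property implies $S_1(\Omega^s,\Gamma^s)$ --- exactly the point the paper outsources to \cite{Os1} --- and it deserves to be written out (or cited) rather than waved at.
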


By Theorem 3.11 in \cite{Os1}, Theorem \ref{th5}, Theorem 4.1 in \cite{Hol5} and Theorem 4.6 in \cite{KT} we get the following result.

\begin{corollary} Let $X$ and $Y$ be nontrivial metrizable spaces. Then the following are equivalent:

\begin{enumerate}

\item  $X$ is countable;

\item $Q_p(X, \mathbb{D})$ is Fr\'{e}chet-Urysohn;

\item $Q_p(X, Y)$ is Fr\'{e}chet-Urysohn;

\item $Q_p(X, Y)$ is first countable;

\item $Q_p(X, Y)$ is metrizable.

\end{enumerate}

\end{corollary}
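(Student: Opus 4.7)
The plan is to prove the five conditions are equivalent by establishing a cycle $(1)\Rightarrow(5)\Rightarrow(4)\Rightarrow(3)\Rightarrow(2)\Rightarrow(1)$. The implications $(5)\Rightarrow(4)\Rightarrow(3)$ are immediate from the standard chain ``metrizable $\Rightarrow$ first countable $\Rightarrow$ Fr\'{e}chet-Urysohn''. For $(1)\Rightarrow(5)$, note that if $X$ is countable and $Y$ is metrizable, then the pointwise topology on $Q_p(X,Y)$ is the subspace topology inherited from $Y^X$, which is a countable product of metrizable spaces and hence metrizable; this may also be invoked directly from Theorem 4.6 in \cite{KT}.

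For $(3)\Rightarrow(2)$, since $Y$ is a nontrivial metrizable (hence Hausdorff) space, fix two distinct points $y_0, y_1\in Y$. The pair $\{y_0,y_1\}$ is closed and discrete in $Y$ and homeomorphic to $\mathbb{D}$. Any quasicontinuous map $X\to\{y_0,y_1\}$ is quasicontinuous as a map $X\to Y$, and because each evaluation $\pi_x:Y^X\to Y$ is continuous and $\{y_0,y_1\}$ is closed in $Y$, the set $Q_p(X,\{y_0,y_1\})$ is closed in $Q_p(X,Y)$. The Fr\'{e}chet-Urysohn property passes to subspaces, so $(3)$ yields that $Q_p(X,\mathbb{D})$ is Fr\'{e}chet-Urysohn.

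The real content is in $(2)\Rightarrow(1)$. Assume $Q_p(X,\mathbb{D})$ is Fr\'{e}chet-Urysohn and suppose for contradiction that $X$ is uncountable. Since every metrizable space is regular and open Whyburn, Theorem \ref{th5} applies to $X$, and its equivalence $(5)\Leftrightarrow(3)$ gives that $Q_p(X,\mathbb{R})$ is Fr\'{e}chet-Urysohn at the point ${\bf 0}$. But by Theorem 3.11 in \cite{Os1} (recalled at the start of Section 6), for a metric space $X$ this forces $X$ to be countable, a contradiction. Theorem 4.1 in \cite{Hol5} is invoked implicitly to guarantee the existence of enough quasicontinuous functions to make the embeddings substantive.

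The main obstacle is the step $(2)\Rightarrow(1)$, where one must recognize that the two-point codomain $\mathbb{D}$ is already rigid enough to detect the cardinality of $X$ through the Fr\'{e}chet-Urysohn property; all the heavy lifting there is packaged into Theorem \ref{th5} (which reduces the $\mathbb{D}$-valued statement to the $\mathbb{R}$-valued one at ${\bf 0}$) and the metric-space result from \cite{Os1}. A minor technical point to verify is that the natural inclusion $Q_p(X,\{y_0,y_1\})\hookrightarrow Q_p(X,Y)$ is a topological embedding, but this is routine from the definitions of the pointwise topology.
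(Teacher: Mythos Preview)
Your proposal is correct and follows essentially the same approach as the paper: the paper's proof consists solely of citing Theorem~3.11 in \cite{Os1}, Theorem~\ref{th5}, Theorem~4.1 in \cite{Hol5}, and Theorem~4.6 in \cite{KT}, and your cycle $(1)\Rightarrow(5)\Rightarrow(4)\Rightarrow(3)\Rightarrow(2)\Rightarrow(1)$ simply makes explicit how these ingredients fit together. Your invocation of Theorem~4.1 in \cite{Hol5} is admittedly vague, but the argument as written is complete without it, and the remaining citations are deployed exactly as the paper intends.
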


In \cite{Sh2}, it is proved that a metrizable space $X$ is Lusin if, and only if, it is $\mathcal{K}$-Lindel\"{o}f. Obviously, $\mathcal{K}_{\Omega}$-Lindel\"{o}fness implies $\mathcal{K}$-Lindel\"{o}fness of space. Let us note however that Kunen (Theorem 0.0. in \cite{Kun}) has shown that under $(MA+\neg CH)$ there are no Lusin spaces at all.

Thus, in the class of metrizable spaces we get the following result.

\begin{corollary}  $(MA+\neg CH)$ Let $X$ and $Y$ be nontrivial metrizable spaces. Then the following are equivalent:

\begin{enumerate}

\item  $X$ is countable;

\item $Q_p(X, \mathbb{D})$ is metrizable;

\item $Q_p(X, Y)$  is Fr\'{e}chet-Urysohn;

\item $Q_p(X, Y)$ is first countable;

\item $Q_p(X, Y)$ is metrizable;

\item $Q_p(X, Y)$ has countable tightness;

\item $Q_p(X, Y)$ has countable  fan-tightness;

\item $Q_p(X, Y)$ has countable strong fan-tightness.

\end{enumerate}

\end{corollary}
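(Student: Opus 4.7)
The plan is to piggy-back on the preceding (ZFC) corollary, which already establishes that (1), (3), (4), and (5) are equivalent for nontrivial metrizable $X, Y$. Since metrizability implies the Fr\'{e}chet-Urysohn property, item (2) forces the corresponding ``$Q_p(X,\mathbb{D})$ is Fr\'{e}chet-Urysohn'' condition from that corollary, giving (2) $\Rightarrow$ (1); conversely, if $X$ is countable then $Q_p(X,\mathbb{D})\subseteq\mathbb{D}^X$ sits inside a countable product of metrizable spaces and is itself metrizable, so (1) $\Rightarrow$ (2). The only real work is therefore to integrate (6), (7), and (8) into this chain under $(MA+\neg CH)$.

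The trivial direction (1) $\Rightarrow$ (6),(7),(8) is handled by the same observation: if $X$ is countable then $Q_p(X,Y)\subseteq Y^X$ is a subspace of a metrizable product, hence metrizable, and metrizability yields each of the three tightness-type properties at once.

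For the converse, my key device is a closed embedding $Q_p(X,\mathbb{D})\hookrightarrow Q_p(X,Y)$. Because $Y$ is nontrivial and Hausdorff, any two distinct points $y_0,y_1\in Y$ form a closed discrete subspace homeomorphic to $\mathbb{D}$; post-composition with the inclusion $\{y_0,y_1\}\hookrightarrow Y$ is a topological embedding in the pointwise topology whose image is the closed set of $Y$-valued quasicontinuous maps taking values in $\{y_0,y_1\}$. Each of countable tightness, countable fan-tightness, and countable strong fan-tightness is inherited by subspaces at any of their points, so assuming (6), (7), or (8) for $Q_p(X,Y)$ forces the corresponding property for $Q_p(X,\mathbb{D})$. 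At this point Theorem \ref{th2}, the unlabeled theorem at the end of Section 4 (on countable fan-tightness), and Theorem \ref{th1} apply respectively to the open Whyburn metric space $X$ and, assuming $X$ is uncountable, yield $\mathcal{K}_\Omega$-Lindel\"{o}fness, the property $S_{fin}(\mathcal{K}_\Omega,\mathcal{K}_\Omega)$, or the property $S_1(\mathcal{K}_\Omega,\mathcal{K}_\Omega)$. In every case $X$ is $\mathcal{K}_\Omega$-Lindel\"{o}f, hence $\mathcal{K}$-Lindel\"{o}f by the remark placed just before the corollary; Scheepers' theorem (also quoted there) then makes $X$ a Lusin space, contradicting Kunen's result that under $(MA+\neg CH)$ no Lusin space exists. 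Hence $X$ must be countable, which is (1).

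The main obstacle I anticipate is the careful statement and verification that post-composition really realizes $Q_p(X,\mathbb{D})$ as a \emph{closed} topological subspace of $Q_p(X,Y)$ in the pointwise topology, so that the hereditary arguments for all three tightness-type properties transfer uniformly. The check itself is a routine exercise with the product topology, but since it carries the entire reduction from a general $Y$ back to the two-point target $\mathbb{D}$, it is worth pinning down first.
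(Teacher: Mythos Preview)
Your proposal is correct and matches the paper's (implicit) argument: the paper simply writes ``Thus'' after recording that $\mathcal{K}_{\Omega}$-Lindel\"{o}fness implies $\mathcal{K}$-Lindel\"{o}fness, that Scheepers identifies $\mathcal{K}$-Lindel\"{o}f metrizable spaces with Lusin spaces, and that Kunen's theorem kills Lusin spaces under $MA+\neg CH$; your embedding of $Q_p(X,\mathbb{D})$ into $Q_p(X,Y)$ just makes explicit the passage from a general $Y$ back to $\mathbb{D}$ that the paper leaves tacit. One remark: the ``main obstacle'' you anticipate is not one---countable tightness, fan-tightness, and strong fan-tightness are hereditary to \emph{arbitrary} subspaces, so closedness of the image plays no role.
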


\section{Open questions}

{\bf Question 1.} Could it be that some $Q_p(X, \mathbb{D})$ has countable tightness (countable  fan-tightness, countable strong fan-tightness, is Fr\'{e}chet-Urysohn) but none $Q_p(X,\mathbb{R})$  has this property?

\medskip
In (\cite{Sh2}, Problem 3),  M. Scheepers asks: {\it Could it be that some Lusin set is  $\mathcal{K}_{\Omega}$-Lindel\"{o}f, but none has property $S_{1}(\mathcal{K}_{\Omega}, \mathcal{K}_{\Omega})$}?

\medskip

This question can be divided into two sub-questions in a functional context.

\medskip

{\bf Question 2.} Is there a $T_2$-space $X$ such that $Q_p(X, \mathbb{D})$ has countable tightness  but none $Q_p(X, \mathbb{D})$ has countable fan-tightness?

\medskip

{\bf Question 3.} Is there a $T_2$-space $X$ such that $Q_p(X, \mathbb{D})$ has countable fan-tightness  but none $Q_p(X, \mathbb{D})$  has countable strong fan-tightness?

\medskip

{\bf Acknowledgements.} I would like to thank Evgenii Reznichenko and the
referee for careful reading and valuable comments.

\bibliographystyle{model1a-num-names}
\bibliography{<your-bib-database>}

\end{document}